\newtheorem{theorem}{Theorem}[section]
\newtheorem{prop}[theorem]{Proposition}
\newtheorem{lemma}[theorem]{Lemma}
\newtheorem{cor}[theorem]{Corollary}
\newtheorem*{thm}{Theorem}
\theoremstyle{definition}
\newtheorem{rem}[theorem]{Remark}
\newtheorem{defi}[theorem]{Definition}
\newcommand{\ra}{\rightarrow}
\newcommand{\IP}{ \mathbb{P}}
\newcommand{\IC }{ \mathbb{C}}
\newcommand{\IR}{ \mathbb{R}}
\newcommand{\IZ}{\mathbb{Z}}
\newcommand{\IQ}{\mathbb{Q}}
\newcommand{\IN}{\mathbb{N}}
\newcommand{\coloneqq}{:=}
\DeclareFontFamily{OT1}{pzc}{}
\DeclareFontShape{OT1}{pzc}{m}{it}{<-> s * [1.10] pzcmi7t}{}
\DeclareMathAlphabet{\mathpzc}{OT1}{pzc}{m}{it}
\DeclareMathOperator{\Pic}{Pic}
\DeclareMathOperator{\ns}{NS}
\DeclareMathOperator{\transc}{Tr}
\DeclareMathOperator{\id}{id}
\DeclareMathOperator{\Hom}{Hom}
\DeclareMathOperator{\divi}{div}
\DeclareMathOperator{\Mon}{Mon}
\DeclareMathOperator{\mon}{Mon}
\DeclareMathOperator{\aut}{Aut}
\DeclareMathOperator{\NS}{NS}
\newcommand{\rk}{\mathrm{rk}\,}
\newcommand{\hsk}{K3^{\left[2\right]}}
\newcommand{\hskn}{K3^{\left[n\right]}}
\def\blfootnote{\xdef\@thefnmark{}\@footnotetext}
\title[Non-symplectic involutions on manifolds of $\hskn$-type]{Non-symplectic involutions on manifolds of $\hskn$-type}
\author{Chiara Camere, Alberto Cattaneo and Andrea Cattaneo} 
\address{Chiara Camere, Max Planck Institute for Mathematics,
Vivatsgasse 7, 53111 Bonn, Germany;
 Dipartimento di Matematica,
Universit\`a degli Studi di Genova, Via Dodecaneso 35, 16146 Genova (GE), Italy } 
\email{camere@dima.unige.it}
\address{Alberto Cattaneo, Institut f\"ur Algebraische Geometrie, Leibniz Universit\"at Hannover, Welfengarten 1, 30167 Hannover, Germany; Dipartimento di Matematica ``F. Enriques'', Universit\`a degli Studi di Milano, Via Cesare Saldini 50, 20133 Milano, Italy; Laboratoire de Math\'ematiques et Applications, Universit\'e de Poitiers, T\'el\'eport 2, Boulevard Marie et Pierre Curie, 86962 Futuroscope Chasseneuil, France.}
\email{cattaneo@math.uni-hannover.de}
\address{Andrea Cattaneo, Institut Camille Jordan,
Universit\'e Claude Bernard Lyon 1,
43 boulevard du 11 novembre 1918,
69100 Villeurbanne, France}
\email{cattaneo@math.univ-lyon1.fr}
\begin{document}

\blfootnote {{\it 2010 Mathematics Subject Classification:} 14J50, 14C05, 14C34.}
\blfootnote {{\it Key words:} Irreducible holomorphic symplectic manifolds, involutions, moduli spaces of polarized manifolds, moduli spaces of twisted sheaves on $K3$ surfaces.}

\begin{abstract}
We study irreducible holomorphic symplectic manifolds deformation equivalent to Hilbert schemes of points on a $K3$ surface and admitting a non-symplectic involution. We classify the possible discriminant forms of the invariant and anti-invariant lattice for the action of the involution on cohomology, and explicitly describe the lattices in the cases where the invariant has small rank. We also give a modular description of all $d$-dimensional families of manifolds of $K3^{[n]}$-type with a non-symplectic involution for $d\geq 19$ and $n\leq 5$, and provide examples arising as moduli spaces of twisted sheaves on a $K3$ surface.
\end{abstract}

\maketitle

\section*{Introduction}

The aim of this note is to explain the classification of non-symplectic involutions on IHS manifolds of $\hskn$-type, thus generalizing to all even dimensions the classification which is already known for $n=1$ by foundational work of Nikulin \cite{NikulinInv} on K3 surfaces and for $n=2$ by the work of Beauville \cite{BeauInv} and of Boissi\`ere, the first author and Sarti \cite{BCS}. The core of the classification result contained in this work comes from Joumaah's PhD thesis \cite{joumaah}, but he kindly decided to let us publish by ourselves. On the other hand, the proof of one of the main results in loc.\ cit.\ is not entirely correct, so in this paper we prove a revised statement (Proposition \ref{discr groups involutions}), in order to obtain the correct classification of non-symplectic involutions on manifolds of $\hskn$-type. 

In the first two authors' work \cite{CC} the interested reader can find the analogue classification for non-symplectic automorphisms of odd prime order: although the lattice-theoretical techniques used are similar, and descend from  work by Nikulin \cite{nikulin}, the prime $p=2$ is somewhat different with respect to other primes because for $n\geq 2$ it always divides $2(n-1)$, which is the discriminant of the Beauville--Bogomolov--Fujiki lattice $L_n:=U^{\oplus 3}\oplus E_8^{\oplus 2}\oplus \langle -2(n-1)\rangle$, i.e.\ the second cohomology lattice of any manifold of $\hskn$-type.

Concerning involutions, in \cite{catt_autom_hilb} the second author computed the automorphism group of the Hilbert scheme of $n$ points over a generic projective $K3$ surface, showing that this group (if not trivial) is generated by exactly one non-natural and non-symplectic involution (for $n=2$, this had already been proved by Boissi\`ere, the third author, Nieper-Wisskirchen and Sarti \cite{bcnws}). The present paper also provides a partial extension of these results, allowing the pair consisting of a Hilbert scheme and its involution to be deformed.

\subsection*{IHS manifolds and automorphisms}

We recall that an irreducible holomorphic symplectic (IHS) manifold is a compact complex K\"ahler manifold $X$ which is simply connected and such that $H^{2, 0}(X)$ is generated by the class of a single holomorphic symplectic (i.e.\ everywhere non-degenerate) $2$-form. Basic examples of IHS manifolds are provided by $K3$ surfaces and, in dimension $2n$, by the Hilbert scheme of zero-dimensional subschemes of length $n$ of a $K3$ surface. As small deformations of IHS manifolds are still IHS, we can then produce new examples: we say that an IHS manifold is of $\hskn$-type if it is deformation equivalent to the Hilbert scheme of $n$ points on a $K3$ surface.

The deformation theory of IHS manifolds is sufficiently well understood. For any manifold $X$ of $\hskn$-type, a \emph{marking} is a lattice isometry $\eta: H^2(X, \IZ) \longrightarrow L_n$, where we recall that $H^2(X, \IZ)$ is a lattice by means of the Beauville--Bogomolov--Fujiki form (see \cite[$\S$8]{Beauville}). Then, there exists a well-defined compact complex moduli space which parametrizes marked IHS manifolds of $\hskn$-type. A fundamental result, due to work by Huybrechts, Markman and Verbitsky, is the Global Torelli Theorem \cite[Corollary 1.20]{VerbitskyTorelli}, which describes the fibers of the period map associated to this moduli space. 

The use of markings allows us to transfer most of the questions about automorphisms to a purely algebraic setting, involving lattices and their properties. However, we need to determine which of the isometries of the abstract lattice $L_n$ correspond, via the marking, to automorphisms of the IHS manifold. To this end, we will make use of Markman's version of the Torelli Theorem \cite[Theorem 1.3]{markman}.

\subsection*{Structure of the paper and main results}

Our study of involutions on manifolds of $\hskn$-type will be conducted in two steps. In Section \ref{sect: lattices} we study the problem only from a lattice-theoretical point of view: our aim is to classify the possible discriminant groups of pairs $T, S \subset L_n$ consisting of the invariant lattice $T$ and the anti-invariant (or co-invariant) lattice $S$ of a non-symplectic involution. We provide this classification in Proposition \ref{discr groups involutions}, fixing the inaccuracies of \cite{joumaah}. An important ingredient of our proof is the fact that one between the invariant and anti-invariant lattice is $2$-elementary (Proposition \ref{prop: T o S 2-elementary}).

In Section \ref{sec: existence}, by using the Global Torelli Theorem we prove that the conditions determined in Section \ref{sect: lattices} on the abstract lattices $T,S$ are also sufficient to obtain a marked manifold of  $\hskn$-type with a non-symplectic involution, having $T$ and $S$ as invariant and co-invariant lattice respectively.

\begin{thm}[{Theorem \ref{thm: existence}}]
Let $\rho \in O(L_n)$ be an involution whose invariant lattice $T$ is hyperbolic with $\rk(T) \leq 20$. Assume also that $\rho\vert_{A_L} = \pm \id$. Then there exists a marked manifold $(X, \eta)$ of $\hskn$-type with a non-symplectic involution $i \in \aut(X)$ such that $\eta \circ i^* = \rho \circ \eta$.
\end{thm}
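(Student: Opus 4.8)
The plan is to realize $\rho$ first as a monodromy operator and then, after choosing a suitable period and a suitable point in the corresponding fibre of the period map, as the action of an automorphism. First I would check that $\rho$ lies in the monodromy group $\Mon^2(L_n)$. By Markman's description this group is $\{g\in O^+(L_n): g|_{A_{L_n}}=\pm\id\}$, so it suffices to verify that $\rho\in O^+(L_n)$, the hypothesis $\rho|_{A_{L_n}}=\pm\id$ being given. Since $T$ is hyperbolic, a maximal positive subspace of $L_n\otimes\IR$ decomposes as the orthogonal sum of a positive line in $T\otimes\IR$ and a positive $2$-plane in $S\otimes\IR$ (recall $S=T^\perp$ has signature $(2,\,21-\rk T)$). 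On this $3$-space $\rho$ acts as $\id\oplus(-\id)$, which has determinant $+1$; hence $\rho$ preserves the orientation of the positive cone, and therefore $\rho\in\Mon^2(L_n)$.

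Next I would fix the period. Choose $[\omega]\in\IP(S\otimes\IC)$ with $\omega^2=0$ and $\omega\bar\omega>0$ -- possible because $S$ has signature $(2,\ast)$ -- and generic, so that $\omega^\perp\cap L_n=T$: as $T=S^\perp$ is primitive, for $\omega$ avoiding countably many hyperplanes the only integral classes orthogonal to $\omega$ are those in $T$. By surjectivity of the period map there is a marked manifold $(X,\eta)$ of $\hskn$-type with $\eta(H^{2,0}(X))=\IC\omega$, and then $\NS(X)=\eta^{-1}(T)$. Setting $\rho':=\eta^{-1}\circ\rho\circ\eta$, we have $\rho'(\omega)=-\omega$, so $\rho'$ preserves $H^{2,0}(X)$ and is a Hodge isometry acting as $-\id$ on the symplectic form; moreover $\rho'|_{\NS(X)}=\id$.

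Finally I would adjust the point in the fibre so that $\rho'$ fixes a Kähler class, and then invoke the Torelli theorem. The Kähler cone $\mathcal{K}_X$ is a chamber of the positive cone $\mathcal{C}_X\subset H^{1,1}(X,\IR)$ cut out by the walls orthogonal to the relevant (MBM/wall) classes, which are of type $(1,1)$ and hence lie in $\NS(X)=\eta^{-1}(T)$. Writing $H^{1,1}(X,\IR)=(\NS(X)\otimes\IR)\oplus\bigl(S\cap H^{1,1}(X,\IR)\bigr)$ with the second summand negative definite, each such wall contains this summand, so the chamber decomposition is induced from that of the positive cone of $\NS(X)\otimes\IR$; in particular every chamber, and hence $\mathcal{K}_X$, meets $\NS(X)\otimes\IR$. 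Pick $\kappa\in\mathcal{K}_X\cap(\NS(X)\otimes\IR)$: since $\rho'|_{\NS(X)}=\id$ we get $\rho'(\kappa)=\kappa\in\mathcal{K}_X$. Thus $\rho'$ is a monodromy Hodge isometry sending a Kähler class to a Kähler class, and by Markman's version of the Torelli theorem it is induced by an automorphism $i\in\aut(X)$, i.e.\ $i^*=\rho'$ and hence $\eta\circ i^*=\rho\circ\eta$. Since $i^*\sigma=-\sigma$, the automorphism $i$ is non-symplectic, and since the representation $\aut(X)\to O(H^2(X,\IZ))$ is injective for manifolds of $\hskn$-type while $\rho'$ is an involution, $i$ is an involution as well.

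I expect the delicate point to be this last step: one must use the precise structure of the fibre of the period map and of the wall-and-chamber decomposition of the positive cone (Markman's results on wall classes) to guarantee that a $\rho'$-fixed Kähler class exists. The signature computation of the first paragraph is exactly what makes $\rho$ orientation-preserving, while the fact that the walls are orthogonal to classes of $\NS(X)=\eta^{-1}(T)$, which $\rho'$ fixes pointwise, is what forces $\rho'$ to respect the chamber structure and so to preserve $\mathcal{K}_X$.
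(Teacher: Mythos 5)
Your proof is correct and follows essentially the same route as the paper's: pick a very general period in $\Omega_S$ to obtain a marked $(X,\eta)$ with $\NS(X)=\eta^{-1}(T)$, transport $\rho$ to a Hodge isometry fixing a K\"ahler class, check it is a monodromy operator via the $\pm\id$ action on $A_L$ and orientation-preservation, and conclude by Markman's Hodge-theoretic Torelli theorem together with the injectivity of $\aut(X)\to O(H^2(X,\IZ))$. The only difference is one of detail: the paper outsources the existence of the $T$-polarized manifold to \cite[Prop.~5.3]{BCS} and simply asserts the existence of a fixed ample class, whereas you spell out the signature computation for $O^+$ and the wall-and-chamber argument showing $\mathcal{K}_X$ meets $\NS(X)\otimes\IR$ --- both of which are sound.
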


In Section \ref{sect: geography} we focus on the cases where the invariant lattice has small rank, i.e.~$\rk(T) = 1$ or $2$. For $2 \leq n \leq 5$ we explicitly classify the isometry classes of the pairs of lattices $T,S$ (Propositions \ref{prop: rank one}, \ref{prop:rank two +id} and \ref{prop:rk two -id}). Non-symplectic involutions of manifolds of $\hskn$-type having invariant lattice of small rank are particularly interesting, since they deform in families of large dimensions. For each  possible action on cohomology $\rho \in O(L_n)$ in our classification, we study the corresponding \mbox{moduli} space $\mathcal{M}_{T, \rho}$ of $(\rho, T)$-polarized manifolds of $\hskn$-type with a non-symplectic involution. 

\begin{thm}[{Theorem \ref{thm: max dim families}}]
Let $(X, \eta)$ be a marked manifold of $K3^{[n]}$-type for \mbox{$2 \leq n \leq 5$}, and let $i \in \aut(X)$ be a non-symplectic involution such that the pair $(X, i)$ deforms in a family of dimension $d \geq 19$. Then $(X, \eta)$ belongs to the closure of one of the following moduli spaces:
 \begin{enumerate}
  \item[$n=2$:] $\mathcal{M}_{\langle 2\rangle, \rho_a}$ or $\mathcal{M}_{U(2), \rho_1}$
  \item[$n=3$:] $\mathcal{M}_{\langle 2\rangle, \rho_a}$, $\mathcal{M}_{\langle 4\rangle, \rho}$ or $\mathcal{M}_{U(2), \rho_1}$
  \item[$n=4$:] $\mathcal{M}_{\langle 2\rangle,\rho_a}$, $\mathcal{M}_{\langle 2\rangle,\rho_b}$  or $\mathcal{M}_{U(2), \rho_1}$
  \item[$n=5$:] $\mathcal{M}_{\langle 2\rangle, \rho_a}$, $\mathcal{M}_{U(2), \rho_1}$ or $\mathcal{M}_{U(2), \rho_2}$
 \end{enumerate}  
 where $\rho,\rho_a,\rho_1,\rho_2$ are defined in Remark \ref{rem: moduli spaces recap}.
 
All these moduli spaces are irreducible with the exception of $\mathcal{M}_{U(2), \rho_2}$ for $n=5$, which has three distinct irreducible components.
\end{thm}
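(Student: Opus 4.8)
The plan is to reduce the dimension hypothesis to a bound on the rank of the invariant lattice, and then to invoke the explicit rank-one and rank-two classifications of Section~\ref{sect: geography}. Write $\rho = \eta \circ i^* \circ \eta^{-1} \in O(L_n)$ for the induced action on the abstract lattice, and let $T, S \subset L_n$ be its invariant and anti-invariant lattices, so that $\rk(T) + \rk(S) = 23$. Since $i$ is non-symplectic we have $i^*\omega = -\omega$ for the symplectic form, so the period of $X$ lies in the period domain $\Omega_S = \{[\sigma] \in \IP(S \otimes \IC) : (\sigma,\sigma) = 0,\ (\sigma,\bar\sigma) > 0\}$ attached to the anti-invariant lattice. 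The deformations of the pair $(X,i)$ are precisely those keeping $i$ holomorphic, and by the Global Torelli Theorem they are governed by the variation of the period inside $\Omega_S$ (equivalently, they correspond to the anti-invariant $(1,1)$-classes, of which there are $\rk(S)-2$); hence the family has dimension $\rk(S) - 2 = 21 - \rk(T)$. The inequality $d \geq 19$ is thus equivalent to $\rk(T) \leq 2$, and since $T$ is the invariant lattice of a non-symplectic involution it is hyperbolic, so $\rk(T) \in \{1, 2\}$.

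Next I would feed this dichotomy into the classification already carried out. For $\rk(T) = 1$ the admissible pairs $(T, \rho)$ are listed in Proposition~\ref{prop: rank one}; for $\rk(T) = 2$ they are listed in Propositions~\ref{prop:rank two +id} and~\ref{prop:rk two -id}, according to whether $\rho\vert_{A_L} = \id$ or $\rho\vert_{A_L} = -\id$ (that the restriction to the discriminant is necessarily $\pm\id$ follows from Proposition~\ref{prop: T o S 2-elementary} combined with Proposition~\ref{discr groups involutions}). For each resulting isometry class I would read off the lattice $T$ and the action $\rho$, match them to the moduli spaces $\mathcal{M}_{T,\rho}$ named in Remark~\ref{rem: moduli spaces recap}, and retain only those actually occurring for the given value of $n$; Theorem~\ref{thm: existence} guarantees that each such $\mathcal{M}_{T,\rho}$ is non-empty. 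This bookkeeping produces exactly the list displayed for each $2 \leq n \leq 5$. The role of the closure is that a given $(X,\eta)$ may be a special member of its family, whose invariant lattice is strictly larger than the generic one; since the generic invariant lattice of a family of dimension $d \geq 19$ has rank $\leq 2$, the point $(X,\eta)$ still lies in the Zariski closure of the corresponding rank $\leq 2$ stratum $\mathcal{M}_{T,\rho}$.

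Finally I would settle irreducibility by counting connected components. Via Markman's version of the Torelli Theorem each $\mathcal{M}_{T,\rho}$ is a quotient of $\Omega_S$ by the subgroup $\Gamma$ of the monodromy group commuting with $\rho$, so its components correspond to the monodromy orbits of primitive embeddings $T \hookrightarrow L_n$ inducing $\rho$, together with the possible identification of the two halves of $\Omega_S$ effected by $\Gamma$. In the generic situation Nikulin's uniqueness criterion for primitive embeddings yields a single orbit and a connected quotient, giving irreducibility at once. The main obstacle is precisely the exceptional case $\mathcal{M}_{U(2),\rho_2}$ for $n = 5$: there the discriminant-form hypotheses of Nikulin's criterion fail to force uniqueness, and a direct analysis of the $O(L_5)$-orbits of primitive embeddings of $U(2)$ into $L_5$ (and of the action of $\Gamma$ on the components of $\Omega_S$) is needed to establish that exactly three irreducible components arise. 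This orbit count, rather than the dimension reduction of the first two paragraphs, is the delicate step of the argument.
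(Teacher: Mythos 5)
Your first two paragraphs reproduce the paper's argument: the dimension count $d = \rk(S)-2 = 21-\rk(T)$ reduces the hypothesis to $\rk(T)\le 2$, and the list is then read off from Propositions \ref{prop: rank one}, \ref{prop:rank two +id}, \ref{prop:rk two -id} together with Lemma \ref{lem:closure}, which absorbs the non-listed rank-two strata (e.g.\ $T\cong U$, $T\cong\langle 2\rangle\oplus\langle -2(n-1)\rangle$, $T\cong\langle 2\rangle\oplus\langle -2\rangle$) into the closures of the displayed maximal families. That part is fine, and your identification of the deformation space with the \emph{anti}-invariant $(1,1)$-classes is the correct one.

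The gap is in the irreducibility argument. The number of irreducible components of $\mathcal{M}_{T,\rho}$ is \emph{not} counted by monodromy orbits of primitive embeddings $T\hookrightarrow L_n$ inducing $\rho$, nor by the identification of the two halves of $\Omega_S$: the embedding is already fixed (up to isomorphism) as part of the datum $(T,\rho)$ --- indeed the two non-isomorphic embeddings of $U(2)$ into $L_n$ are what distinguish $\rho_1$ from $\rho_2$ in the first place, so an orbit count of embeddings cannot produce several components of a single $\mathcal{M}_{U(2),\rho_2}$. What governs the component count is Joumaah's result that deformation types of $(\rho,T)$-polarized pairs are in bijection with $\mathrm{KT}(T)/\Gamma_T$, the orbits of K\"ahler-type chambers cut out in the positive cone $\mathcal{C}_T$ by the wall divisors $\delta$ \emph{lying in $T$}; equivalently, the period map $\mathcal{M}_{T,\rho}\to\Omega_S^0$ is surjective but its fibres see one point per chamber orbit. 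For $\rho_1$ every primitive $\delta\in U(2)$ has $\divi(\delta)=1$, and a wall divisor of divisibility one must have $\delta^2=-2$, which does not occur in $U(2)$: no walls, one chamber, irreducible. For $\rho_2$ with $n=5$ the embedding forces $\divi(\delta)\in\{1,2\}$ and the numerical classification of wall divisors in $L_5$ leaves exactly the two pairs $(\delta^2,\divi(\delta))=(-8,2)$ and $(-16,2)$, giving two walls meeting the interior of $\mathcal{C}_T$ and hence three chambers, which lie in three distinct $\Gamma_T$-orbits because their walls have different squares. Your proposed ``direct analysis of the $O(L_5)$-orbits of primitive embeddings'' points at the wrong object and would return a single component; without the wall-and-chamber computation inside $T$ (Propositions \ref{prop:U(2)irreducible} and \ref{prop:U(2) 3 chambers}) the final assertion of the theorem is not established.
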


Finally, in Section \ref{sect: examples} we use moduli spaces of twisted sheaves on $K3$ surfaces to describe the generic element in the maximal moduli spaces $\mathcal{M}_{T, \rho}$ of dimension $19$ (Propositions \ref{prop: twisted induced U(2) irred} and \ref{prop: twisted induced U(2) n=5}), though only in one case the involution is induced by a non-symplectic involution of the underlying $K3$ surface. Finding an explicit description of the automorphism in the other families is still an open problem.

\subsection*{Notations and conventions}

Throughout the paper, all the varieties will be defined over the field $\IC$ of complex numbers.

A \emph{lattice} is a free abelian group $M$ equipped with a symmetric non-degenerate bilinear form $\left(\cdot, \cdot\right): M \times M \ra \IZ$. Its discriminant group $A_M$ is defined as $A_M = M^\vee / M$, where $M^\vee = \Hom_\IZ(M, \IZ)$ is the dual group of $M$. If $A_M$ is cyclic of order $m$, we write $A_M \cong \frac{\IZ}{m\IZ}\left(\alpha\right)$ if the finite quadratic form $q_M: A_M \ra \IQ / 2\IZ$ (induced by the quadratic form on $M$) takes value $\alpha$ on a generator of $A_M$. For any positive integer $n \geq 2$, we will denote by $L_n$ the lattice
\[L_n = U^{\oplus 3} \oplus E_8^{\oplus 2} \oplus \langle -2(n - 1) \rangle,\]
where $U$ is the hyperbolic plane, $E_8$ is the unique unimodular lattice of signature $(0, 8)$ and for any integer $t \neq 0$ we denote by $\langle t \rangle$ the lattice generated by an element $\delta$ with $(\delta, \delta) = t$.

For a pair of lattices $M$, $N$ there may be several non-isometric embeddings of $M$ into $N$. When we say that $M$ is embedded in $N$, writing $M \subset N$, we always mean that an embedding $j: M \hookrightarrow N$ has been fixed. We will consider two such embeddings $j, j'$ as being isomorphic if there exist isometries $\psi \in O(M)$ and $\varphi \in O(N)$ such that $j \circ \psi = \varphi \circ j'$. The images $j(M)$, $j'(M)$ inside $N$ are also called \emph{isomorphic sublattices} according to \cite[$\S$1.5]{nikulin}.

\subsection*{Acknowledgements}

Chiara Camere is grateful to Max Planck Institute for Mathematics in Bonn for its hospitality and financial support. Alberto Cattaneo was supported by Universit\`a degli Studi di Roma Tor Vergata as a member of the project ``Families of curves: their moduli and their related varieties'' (Codice Unico Progetto:  E81$\vert$18000100005, in the framework of \emph{Mission Sustainability 2017} -- Tor Vergata University of Rome, PI: Prof.\ Flaminio Flamini). Andrea Cattaneo is supported by the LABEX MILYON (ANR-10-LABX-0070) of Universit\'e de Lyon, within the program ``Investissements d'Avenir'' (ANR-11-IDEX- 0007) operated by the French National Research Agency (ANR) and is member of GNSAGA of INdAM. The authors are grateful to Alessandra Sarti and Bert van Geemen for reading the paper and for their suggestions, as well as to Giovanni Mongardi for useful remarks.

\section{Involutions of the lattice \texorpdfstring{$L_n$}{Ln}}\label{sect: lattices}

\subsection{Invariant and anti-invariant lattices} \label{sec: inv and anti-inv lattices}

Let $(X, i)$ be a pair consisting of an IHS manifold $X$ of $\hskn$-type and a non-symplectic involution $i\in \aut(X)$. The lattice $H^2(X, \IZ)$ is isometric to $L_n = U^{\oplus 3} \oplus E_8^{\oplus 2} \oplus \langle -2(n - 1) \rangle$, as we already recalled, and $i^* \in \Mon^2(X)$, which is the subgroup of monodromy operators inside $O(H^2(X,\IZ))$. We now fix $n\geq 2$ and we write $L:=L_n$ for the sake of simplicity. 

By \cite[Cor.\ 9.5(1)]{markman} we have a primitive embedding $L \hookrightarrow M$ where $M \coloneqq U^{\oplus 4} \oplus E_8^{\oplus 2}$ is the Mukai lattice, unimodular of rank $24$. Observe that, if we call $\delta$ a generator of $\langle -2(n - 1) \rangle$ in $L$, then $A_L$ is cyclic generated by  $\frac{1}{2(n - 1)}\delta $, i.e.\
\begin{equation}\label{eq: discriminant L}
A_L = \frac{\IZ}{2(n - 1) \IZ}\left( -\frac{1}{2(n - 1)} \right).
\end{equation}
We denote by $L^\perp$ the orthogonal complement of $L$ inside $M$. By \cite[Cor.\ 1.6.2]{nikulin} we have
\begin{equation}\label{eq: discriminant lperp}
A_{L^{\perp}} \cong \frac{\IZ}{2(n - 1)\IZ} \left( \frac{1}{2(n - 1)} \right).
\end{equation}
Since $L^{\perp} \subset M$ has rank one, we deduce that $L^{\perp} \cong \langle 2(n - 1) \rangle$.

After choosing a marking (i.e.\ an isometry) $\eta: H^2(X, \IZ) \ra L$, we can consider the action $i^* \in O(L)$. By \cite[Lemma 9.2]{markman}, $i^*$ satisfies the following properties: it has spin norm equal to $1$ (equivalently, it is orientation preserving) and it induces $\pm \id$ on the discriminant group $A_L$. This means that $\pm i^* \in \widetilde{O}(L)$, where for any lattice $\Lambda$ the stable orthogonal group $\widetilde{O}(\Lambda)$ is the subgroup of $O(\Lambda)$ consisting of isometries that induce the identity on the discriminant group $A_\Lambda$. Let $\sigma = \pm i^*$ be such that $\sigma \in \widetilde{O}(L)$.

The \emph{invariant lattice} of the involution $i \in \aut(X)$ is the sublattice $H^2(X, \IZ)^{i^*} \subset H^2(X, \IZ)$ of elements that are fixed by $i^*$. Its orthogonal complement in $H^2(X, \IZ)$ is called the \emph{anti-invariant} (or co-invariant) lattice. Notice that the anti-invariant lattice coincides with $\ker(\id + i^*)$ (see \cite[\S 5]{smith}) and therefore it is equal to $H^2(X, \IZ)^{-i^*}$, the invariant lattice of $-i^*$.

We now show that one between the invariant and the anti-invariant lattice of $i^*$ is $2$-elementary.

\begin{prop} \label{prop: T o S 2-elementary}
Let $X$ be a manifold of $K3^{[n]}$-type and let $i \in \aut(X)$ be a non-symplectic involution. Then one of the following holds:
\begin{enumerate}
\item $i^*$ acts as $\id$ on the discriminant group of $H^2(X, \IZ)$ and $H^2(X, \IZ)^{-i^*}$ is $2$-e\-le\-men\-ta\-ry;
\item $i^*$ acts as $-\id$ on the discriminant group of $H^2(X, \IZ)$ and $H^2(X, \IZ)^{i^*}$ is $2$-elementary.
\end{enumerate}
\end{prop}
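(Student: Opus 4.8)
The plan is to extend the involution $i^*$ to an involution of the unimodular Mukai lattice $M$ and then invoke the classical fact that, on a unimodular lattice, both the invariant and the anti-invariant lattice of an involution are $2$-elementary. Concretely, write $L = L_n$ and identify $H^2(X,\IZ)$ with $L$ via a marking, so that $\sigma := i^* \in O(L)$ is an involution with invariant lattice $T$ and anti-invariant lattice $S = T^\perp$. By \cite[Lemma 9.2]{markman} we already know that $\sigma$ acts as $\varepsilon\cdot\id$ on $A_L$ for some $\varepsilon \in \{\pm 1\}$; this is exactly the dichotomy (1)/(2), and the argument runs uniformly in $\varepsilon$.

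First I would use the primitive embedding $L \hookrightarrow M$ recalled above, with $M = U^{\oplus 4}\oplus E_8^{\oplus 2}$ unimodular and $L^\perp \cong \langle 2(n-1)\rangle$ of rank one. By Nikulin's correspondence between overlattices and isotropic subgroups \cite[Prop.~1.4.1, Cor.~1.5.2]{nikulin}, the lattice $M$ is encoded by the graph $\Gamma \subseteq A_L \oplus A_{L^\perp}$ of the gluing anti-isometry $\gamma : A_L \xrightarrow{\sim} A_{L^\perp}$, and a pair $(\sigma, \phi) \in O(L)\times O(L^\perp)$ extends to an isometry of $M$ if and only if it preserves $\Gamma$, i.e.\ if and only if $\bar\phi\circ\gamma = \gamma\circ\bar\sigma$ on $A_L$. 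Taking $\phi = \varepsilon\cdot\id_{L^\perp}$ (an isometry, as $\pm\id$ are the only isometries of a rank-one lattice), this compatibility holds since both sides equal $\varepsilon\gamma$. Thus $\sigma$ extends to an involution $\widetilde\sigma \in O(M)$ with $\widetilde\sigma\vert_L = \sigma$ and $\widetilde\sigma\vert_{L^\perp} = \varepsilon\cdot\id$.

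Next I would identify the relevant eigenlattice of $\widetilde\sigma$. If $\varepsilon = +1$, then $\widetilde\sigma$ fixes $L^\perp$ pointwise, so its anti-invariant lattice $\ker(\id + \widetilde\sigma)$ is exactly $S$, which is primitive in $M$; if $\varepsilon = -1$, then $\widetilde\sigma$ acts as $-\id$ on both $S$ and $L^\perp$, while fixing $T$, so its invariant lattice is exactly $T$. In either case the lattice we must prove $2$-elementary is an eigenlattice of the involution $\widetilde\sigma$ of the unimodular lattice $M$. Finally I would apply the standard fact: since $2M \subseteq M^{\widetilde\sigma}\oplus M^{-\widetilde\sigma}$ (because $2x = (x+\widetilde\sigma x)+(x-\widetilde\sigma x)$), the glue group $M/(M^{\widetilde\sigma}\oplus M^{-\widetilde\sigma})$ is $2$-elementary; unimodularity of $M$ forces the discriminant groups $A_{M^{\widetilde\sigma}}$ and $A_{M^{-\widetilde\sigma}}$ to be isomorphic to this glue group, hence $2$-elementary. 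This gives (1) when $\varepsilon=+1$, with $S = H^2(X,\IZ)^{-i^*}$ being $2$-elementary, and (2) when $\varepsilon = -1$, with $T = H^2(X,\IZ)^{i^*}$ being $2$-elementary.

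The step I expect to be the main obstacle is the bookkeeping in the extension to $M$: one must verify that $\varepsilon\cdot\id_{L^\perp}$ is the correct isometry to glue with $\sigma$ (equivalently, that the signs in Nikulin's graph condition genuinely match $\sigma\vert_{A_L} = \varepsilon\cdot\id$), and that $\widetilde\sigma$ is an involution rather than merely an isometry of higher order. The verification that the targeted eigenlattice equals $S$ (resp.\ $T$) and not its saturation together with $L^\perp$ also requires care, precisely because $\widetilde\sigma$ acts by the same scalar on $L^\perp$ and on one of $S$, $T$.
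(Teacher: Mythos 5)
Your proposal is correct and follows essentially the same route as the paper: embed $L$ into the unimodular Mukai lattice $M$, extend the involution by $\pm\id$ on the rank-one complement $L^\perp$, identify the relevant eigenlattice of the extension with $S$ (resp.\ $T$), and conclude via the $2$-elementarity of eigenlattices of involutions of even unimodular lattices. The only cosmetic differences are that the paper first normalizes $\sigma=\pm i^*\in\widetilde{O}(L)$ and cites \cite[Lemma 7.1]{ghs2} and \cite[Lemma 3.5]{ghs3} for the extension and the $2$-elementarity, whereas you carry out the gluing via Nikulin's criterion and prove the $2$-elementarity directly from $2M\subseteq M^{\widetilde\sigma}\oplus M^{-\widetilde\sigma}$.
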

\begin{proof}
Consider $\sigma \in \widetilde{O}(L)$ as above: in both cases we want to show that the invariant lattice of $-\sigma$ is $2$-elementary. By \cite[Lemma 7.1]{ghs2}, we can extend $\sigma$ to an isometry $\tau \in \widetilde{O}(M)$ such that $\tau\vert_{L^{\perp}} = \id_{L^{\perp}}$ and with the following properties:
\begin{enumerate}
\item $L^\sigma \subset M^\tau$;
\item $L^{-\sigma} \subset M^{-\tau}$;
\item $L^{\perp} \subset M^\tau$.
\end{enumerate}

As a consequence, $L^\sigma \oplus L^{\perp} \subset M^\tau$ is a finite index sublattice and moreover, inside the lattice $M$:
\[M^{-\tau} = (M^\tau)^{\perp} \subset (L^\sigma \oplus L^{\perp})^{\perp} = (L^ \sigma)^{\perp} \cap L \subset L.\]
Hence $L^{-\sigma} = M^{-\tau}$. The invariant and anti-invariant lattices of an involution of an even unimodular lattice are $2$-elementary by \cite[Lemma 3.5]{ghs3}: this concludes the proof.
\end{proof}

With the same notation used above, we remark the following facts.

\begin{lemma}\label{lemma: orthogonal Lperp}
\begin{enumerate}\leavevmode
 \item The lattice $L^\sigma$ is primitively embedded in $M^\tau$.
 \item The lattice $L^{\perp}$ is primitively embedded in $M^\tau$.
 \item The lattices $L^\sigma$ and $L^{\perp}$ are the orthogonal complement of each other in $M^\tau$.
\end{enumerate}
\end{lemma}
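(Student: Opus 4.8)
The plan is to leverage the structure established in the proof of Proposition \ref{prop: T o S 2-elementary}, where we produced the isometry $\tau \in \widetilde{O}(M)$ extending $\sigma$ with $\tau|_{L^\perp} = \id_{L^\perp}$, and we showed $L^\sigma \oplus L^\perp \subset M^\tau$ is a finite index sublattice. Parts (1) and (2) are the assertions that $L^\sigma$ and $L^\perp$ sit primitively inside $M^\tau$, and part (3) is the orthogonality statement. My approach is to deduce all three from the identities already available, together with the general fact that for any sublattice $N$ of a lattice $P$, the saturation $(N_P)^{\mathrm{sat}}$ of $N$ in $P$ equals $(N^\perp)^\perp \cap P$, so that an $M^\tau$-sublattice $N$ is primitive in $M^\tau$ precisely when it coincides with its double orthogonal in $M^\tau$.

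First I would prove (3), since (1) and (2) will follow from it. The inclusion $L^\sigma \subset M^\tau$ and $L^\perp \subset M^\tau$ are exactly properties (1) and (3) of $\tau$ from the previous proof, and since $\sigma|_L$ fixes $L^\sigma$ and acts as $\pm\id$ compatibly, $L^\sigma$ and $L^\perp$ are mutually orthogonal inside $M$ (one lies in $L$, the other in $L^\perp$, and orthogonality in $M$ restricts). Thus $L^\perp \subseteq (L^\sigma)^{\perp_{M^\tau}}$ and $L^\sigma \subseteq (L^\perp)^{\perp_{M^\tau}}$. To get equality I would compare ranks and use the finite-index fact: since $L^\sigma \oplus L^\perp \subset M^\tau$ has finite index, $\rk(M^\tau) = \rk(L^\sigma) + \rk(L^\perp)$, which forces $(L^\sigma)^{\perp_{M^\tau}}$ to have the same rank as $L^\perp$; combined with the computation in the Proposition that $L^{-\sigma} = M^{-\tau}$ (so the invariant/anti-invariant decomposition of $M$ under $\tau$ matches that of $L$ under $\sigma$ together with the fixed $L^\perp$), the orthogonal complements are pinned down exactly. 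The clean way is to observe $(L^\sigma)^{\perp_M} = L^{-\sigma} \oplus L^\perp = M^{-\tau} \oplus L^\perp$ as finite-index, intersect with $M^\tau$, and note $M^{-\tau} \cap M^\tau = 0$, leaving $L^\perp$ as the orthogonal complement of $L^\sigma$ in $M^\tau$ (and symmetrically).

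For (1) and (2), primitivity of $L^\sigma$ in $M^\tau$ then follows: $L^\sigma = M^\tau \cap L$ is the fixed locus of the restriction of an involution, and the fixed sublattice of any involution on a lattice is always primitive (if $m \in M^\tau$ and $km \in L^\sigma$ for some nonzero integer $k$, then $\tau(km)=km$ gives $\tau(m)=m$ since the lattice is torsion-free, so $m \in L^\sigma$). The same argument applies verbatim to $L^\perp = M^\tau$-fixed-by-$\tau$ intersected appropriately, but more directly $L^\perp$ is primitive in $M$ (being the orthogonal complement of $L$, hence saturated) and primitivity is inherited under restriction to the sublattice $M^\tau$ once we know $L^\perp \subset M^\tau$.

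The main obstacle I anticipate is being careful about the distinction between orthogonal complements taken in $M$ versus in $M^\tau$, and ensuring the finite-index claims are used correctly rather than assuming unimodularity of $M^\tau$ (which need not hold). The cleanest route is to avoid rank-counting heuristics and instead argue directly: establish $L^\sigma = L \cap M^\tau$ and $L^\perp \subset M^\tau$, show mutual orthogonality in $M$, and then use that for a primitively embedded pair whose direct sum has finite index in $M^\tau$, each is the saturation of the other's complement — which, because both are already primitive (fixed loci of involutions or orthogonal complements), yields the exact equality in (3) without needing any unimodularity input.
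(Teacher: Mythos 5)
Your proposal is correct and follows essentially the same route as the paper: primitivity of $L^\sigma$ and $L^{\perp}$ in $M^\tau$ is inherited from their primitivity in $M$ via the inclusions $L^\sigma, L^{\perp} \subset M^\tau \subset M$, and part (3) follows from mutual orthogonality together with primitivity and the rank count supplied by the finite-index decomposition $L^\sigma \oplus L^{\perp} \subset M^\tau$. One small caveat: your parenthetical justification for (1) concludes $m \in L^\sigma$ from $\tau(m) = m$ alone, which omits the intermediate step $m \in L$ (obtained from $km \in L^\sigma \subset L$ and the primitivity of $L$ in $M$); but that is precisely the ingredient the paper invokes, and your identification $L^\sigma = L \cap M^\tau$ already supplies it.
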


\begin{proof}\leavevmode
\begin{enumerate}
\item As $L^\sigma \subset L$ and $L \subset M$ are primitive, we deduce that $L^\sigma \subset M$ is primitive. The claim follows then from the inclusion $L^\sigma \subset M^\tau$.
\item This follows from the fact that $L^{\perp} \subset M$ is primitive and $L^{\perp} \subset M^\tau \subset M$.
\item Since $(L^\sigma, L^{\perp}) = 0$, we deduce that $L^{\perp} \subset (L^\sigma)^{\perp_{M^\tau}}$. Moreover, both $L^{\perp}$ and $(L^\sigma)^{\perp_{M^\tau}}$ are primitive sublattices of $M^\tau$: since they have the same rank, they must coincide.\qedhere
\end{enumerate}
\end{proof}

In the same spirit of \cite[Def.~4.1]{BCS}, we give the following definition.

\begin{defi}
An automorphism $f$ of a manifold $X$ of  $\hskn$-type is \emph{natural} if there exists a $K3$ surface $\Sigma$ and $\varphi\in\aut(\Sigma)$ such that $(X, f)$ is deformation equivalent to $(\Sigma^{[n]}, \varphi^{[n]})$.
\end{defi}

\begin{lemma}
Let $X$ be a manifold of  $\hskn$-type and $i \in \aut(X)$ be a natural non-symplectic involution. Then $i^* \in \widetilde{O}(H^2(X, \IZ))$.
\end{lemma}
\begin{proof}
As shown in \cite[\S 4]{BCS}, the isomorphism class of the invariant lattice of a non-symplectic involution is deformation invariant. For the pair $(\Sigma^{[n]}, \varphi^{[n]})$, the action of the natural involution on the exceptional divisor of the Hilbert--Chow morphism $\Sigma^{[n]} \ra \Sigma^{(n)}$ is trivial by \cite[Thm.~1]{bs}. Let $\delta \in H^2(\Sigma^{[n]}, \IZ)$ be the class whose double is the exceptional divisor. From $i^*(2 \delta) = 2 \delta$ we get that the image of $L + \frac{1}{2(n - 1)}\delta \in A_L$ is $L + \frac{1}{2(n - 1)}\delta$, hence the action of $i^*$ on $A_L$ is trivial.
\end{proof}

\begin{cor}
Let $X$ be a manifold of  $\hskn$-type and $i \in \aut(X)$ be a natural non-symplectic involution. Then the anti-invariant lattice $H^2(X, \IZ)^{-i^*}$ is $2$-elementary.
\end{cor}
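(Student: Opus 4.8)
The plan is to combine the two statements that immediately precede this corollary, so that the result follows formally without any new computation. First I would invoke the Lemma just above, which establishes that for a \emph{natural} non-symplectic involution the induced isometry satisfies $i^* \in \widetilde{O}(H^2(X, \IZ))$; by definition of the stable orthogonal group this means that $i^*$ restricts to the identity on the discriminant group $A_L$. In other words, $i^*$ acts as $+\id$ on the discriminant.

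Having fixed the action on $A_L$, I would then feed this into the dichotomy of Proposition \ref{prop: T o S 2-elementary}. Since $i^*$ acts as $\id$ on the discriminant group, we are in case (1) of that proposition, which asserts precisely that the anti-invariant lattice $H^2(X, \IZ)^{-i^*}$ is $2$-elementary. This is exactly the desired conclusion, so the corollary is immediate.

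There is essentially no obstacle to overcome: the statement is a direct consequence of the two ingredients already in place, namely the triviality of the $A_L$-action for natural involutions and the $2$-elementarity dichotomy for arbitrary non-symplectic involutions. The only point worth recording is that the two branches of Proposition \ref{prop: T o S 2-elementary} are distinguished exactly by whether $i^*$ induces $+\id$ or $-\id$ on $A_L$, so that knowing $i^*\vert_{A_L} = \id$ unambiguously selects the branch controlling the anti-invariant lattice; in the case $n = 2$, where $+\id = -\id$ on the order-two group $A_L$, both branches apply and the conclusion is unchanged.
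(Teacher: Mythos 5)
Your proof is correct and is exactly the argument the paper intends: the corollary is an immediate combination of the preceding Lemma (which gives $i^*\vert_{A_L} = \id$ for natural involutions) with case (1) of Proposition \ref{prop: T o S 2-elementary}. The extra remark about the $n=2$ case is a sensible precaution but changes nothing.
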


\subsection{Discriminant groups}\label{sec: discr groups}
We explain in this section the inaccuracies in the proof of \cite[Prop.~5.1.1]{joumaah} and provide the necessary corrections. Adopting our notation, which differs from the one used by Joumaah, let $X$ be a manifold of $\hskn$-type with a non-symplectic involution $i \in \aut(X)$. Let $T = L^{i^*}$, $S = L^{-i^*}$ be, respectively, the invariant and anti-invariant lattices of the involution. The aim of \cite[Prop.~5.1.1]{joumaah} is to classify the discriminant groups $A_T, A_S$. In order to do so, Joumaah considers the isotropic subgroup $H_L \subset A_T \oplus A_S$, which is isomorphic to $\frac{L}{T \oplus S} \cong \left( \frac{\IZ}{2 \IZ} \right)^a$  for some $a \geq 0$, and its projections \mbox{$H_T \coloneqq p_T(H_L) \subset A_T$}, \mbox{$H_S \coloneqq p_S(H_L) \subset A_S$}. In particular, $H_L \cong H_T \cong H_S$ as groups, $\frac{H_L^\perp}{H_L} \cong A_L$ and $\gamma \coloneqq p_S \circ p_T^{-1} : H_T \ra H_S$ is an anti-isometry.

The following proposition provides the complete classification for the discriminant groups $A_T$, $A_S$. We refer to \cite[Prop.~3.2]{CC} for the analogous classification in the case of automorphisms of odd prime order.

\begin{prop}\label{discr groups involutions}
Let $X$ be a manifold of $\hskn$-type, for $n \geq 2$, and let $l \geq 1$ and $m$ odd such that $2(n-1)=2^l m$. Let $\mathcal{G} \subset \aut(X)$ be a group of order $2$ acting non-symplectically on $X$. Denote by $T,S \subset L \coloneqq L_n$, respectively, the invariant and anti-invariant sublattices for the action of $\mathcal{G}$, with $\frac{L}{T \oplus S} \cong \left( \frac{\IZ}{2 \IZ} \right)^a$ for some $a \geq 0$. Then one of the following cases holds:
\begin{enumerate}
\item[\textit{(i)}] $A_T \cong \left(\frac{\IZ}{2 \IZ}\right)^{\oplus a} \oplus \frac{\IZ}{2(n-1) \IZ}$, $A_S \cong \left(\frac{\IZ}{2 \IZ}\right)^{\oplus a}$ or vice versa;
\item[\textit{(ii)}] $a \geq 1$, $A_T \cong \left(\frac{\IZ}{2 \IZ}\right)^{\oplus a-1} \oplus \frac{\IZ}{2(n-1) \IZ}$, $A_S \cong \left(\frac{\IZ}{2 \IZ}\right)^{\oplus a+1}$ or vice versa;
\item[\textit{(iii)}] $l = 1$, $a=0$, $A_T \cong \frac{\IZ}{m \IZ}$, $A_S \cong \frac{\IZ}{2 \IZ}$ or vice versa.
\end{enumerate}

\begin{proof}
Let $i$ be the non-symplectic involution generating the group $\mathcal{G}$ and, as before, let $\sigma = \pm i^*$ be the isometry such that $\sigma \in \widetilde{O}(L)$. Let $T, S$ be the invariant and anti-invariant lattices of $i^*$, as in the statement. If $\sigma = i^*$, then $T = L^\sigma$, $S = L^{-\sigma}$; if $\sigma = -i^*$, then $T = L^{-\sigma}$, $S = L^\sigma$.

As we showed in Proposition \ref{prop: T o S 2-elementary}, the lattice $L^{-\sigma}$ is $2$-elementary, therefore $A_{L^{-\sigma}}$ coincides with its Sylow $2$-subgroup (it actually coincides with its $2$-torsion part). Moreover, $A_{L^\sigma} = \left( A_{L^\sigma} \right)_2 \oplus \frac{\IZ}{m \IZ}$, where $\left( A_{L^\sigma} \right)_2$ denotes the Sylow $2$-subgroup of $A_{L^\sigma}$ (see \cite[Prop.~5.1.1]{joumaah}). Using the notation introduced at the beginning of the section, there exist subgroups $H_{L^{\sigma}} \subset \left( A_{L^\sigma} \right)_2$ and $H_{L^{-\sigma}} \subset A_{L^{-\sigma}}$ isomorphic to $\frac{L}{L^{\sigma} \oplus L^{-\sigma}} \cong \left( \frac{\IZ}{2 \IZ} \right)^{a}$. The case $l = 1$ was correctly discussed by Joumaah in his proof: the only possibilities are $A_{L^{\sigma}} = H_{L^{\sigma}} \oplus \frac{\IZ}{2 \IZ} \oplus \frac{\IZ}{m \IZ}$, $A_{L^{-\sigma}} = H_{L^{-\sigma}}$ (i.e.\ case (i) of the statement), or $A_{L^{\sigma}} = H_{L^{\sigma}} \oplus \frac{\IZ}{m \IZ}$, $A_{L^{-\sigma}} = H_{L^{-\sigma}} \oplus \frac{\IZ}{2 \IZ}$ (case (ii) if $a \geq 1$ or case (iii) if $a = 0$).

If $l \geq 2$, we define $G \coloneqq (A_{L^{\sigma}})_2 \oplus A_{L^{-\sigma}}$ and let $G_2 \subset G$ be the subgroup of elements of order $2$ in $G$. Joumaah showed that $[G : G_2] = 2^{l-1}$, which implies $G \cong \left( \frac{\IZ}{2 \IZ} \right)^{2a} \oplus \frac{\IZ}{2^l \IZ}$. As a consequence, we obtain two possible structures (not just one, as stated in \cite[Prop.~5.1.1]{joumaah}; see below for details) for the summands $(A_{L^{\sigma}})_2$ and $A_{L^{-\sigma}}$ of $G$, recalling that $L^{-\sigma}$ is $2$-elementary and that both $(A_{L^{\sigma}})_2$, $A_{L^{-\sigma}}$ contain a subgroup isomorphic to $\left( \frac{\IZ}{2 \IZ} \right)^{a}$:
\begin{itemize}
\item $(A_{L^{\sigma}})_2 = \left( \frac{\IZ}{2 \IZ} \right)^{a} \oplus \frac{\IZ}{2^l \IZ}, \; A_{L^{-\sigma}} = \left( \frac{\IZ}{2 \IZ} \right)^{a}$ (case (i));
\item $a \geq 1$, $(A_{L^{\sigma}})_2 = \left( \frac{\IZ}{2 \IZ} \right)^{a-1} \oplus \frac{\IZ}{2^l \IZ}, \; A_{L^{-\sigma}} = \left( \frac{\IZ}{2 \IZ} \right)^{a+1}$ (case (ii)).\qedhere
\end{itemize}
\end{proof}
\end{prop}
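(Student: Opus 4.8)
The plan is to reduce the whole statement to the $2$-primary parts of the discriminant groups and then to a short group-theoretic distribution argument. First I would fix $\sigma = \pm i^* \in \widetilde{O}(L)$ as in the earlier discussion, so that $\{T,S\} = \{L^\sigma, L^{-\sigma}\}$ and, by Proposition \ref{prop: T o S 2-elementary}, the lattice $L^{-\sigma}$ is $2$-elementary. Writing $A_{L^\sigma} = (A_{L^\sigma})_2 \oplus \IZ/m\IZ$ and noting that the glue group $H_L \cong (\IZ/2\IZ)^a$ is $2$-torsion, I observe that the overlattice $L \supset L^\sigma \oplus L^{-\sigma}$ only affects the $2$-primary parts: the odd part $\IZ/m\IZ$ of $A_L$ sits untouched inside $A_{L^\sigma}$, while $A_{L^{-\sigma}}$ is entirely $2$-torsion. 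This isolates the problem to the finite abelian $2$-group $G := (A_{L^\sigma})_2 \oplus A_{L^{-\sigma}}$.

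Next I would pin down the isomorphism type of $G$. Taking $2$-parts in the overlattice index formula $|A_{L^\sigma}|\,|A_{L^{-\sigma}}| = |H_L|^2\,|A_L|$ gives $|G| = 2^{2a+l}$. The decisive computation is that the subgroup $G_2$ of elements killed by $2$ has index $[G:G_2] = 2^{l-1}$, which I would establish by analysing the glue subgroup $H_L$ inside $G$ and using that the $2$-part of $A_L \cong H_L^\perp / H_L$ is cyclic of order $2^l$. Together with the order count, this forces $G \cong (\IZ/2\IZ)^{2a} \oplus \IZ/2^l\IZ$: a single cyclic factor of order $2^l$, and otherwise $2$-elementary.

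The heart of the argument — and the place where the statement of \cite{joumaah} must be corrected — is the distribution of the factors of $G$ between the two summands $(A_{L^\sigma})_2$ and $A_{L^{-\sigma}}$. Since $A_{L^{-\sigma}}$ is $2$-elementary, for $l \geq 2$ the factor $\IZ/2^l\IZ$ is forced to lie in $(A_{L^\sigma})_2$; writing $(A_{L^\sigma})_2 = (\IZ/2\IZ)^c \oplus \IZ/2^l\IZ$ and $A_{L^{-\sigma}} = (\IZ/2\IZ)^b$ I get $b + c = 2a$. The two containments $H_{L^\sigma}, H_{L^{-\sigma}} \cong (\IZ/2\IZ)^a$ then impose $b \geq a$ and — crucially — $c + 1 \geq a$, the extra $+1$ coming from the $2$-torsion element contributed by the $\IZ/2^l$ summand. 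These inequalities together with $b+c = 2a$ leave exactly $c \in \{a-1, a\}$, which are precisely cases (ii) and (i). The case $l=1$ I would treat identically, the only difference being that the distinguished cyclic factor is now itself $2$-torsion and so may be absorbed into either summand, which is what produces the additional case (iii) when $a=0$.

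I expect the main obstacle to be exactly this last distribution step. It is deceptively easy to overlook the $+1$ in the inequality $c+1 \geq a$: if one counts the $2$-torsion rank of $(A_{L^\sigma})_2$ as $c$ rather than $c+1$, one is led to conclude $c=a$ and hence to case (i) alone, missing case (ii). Getting this bookkeeping right — and checking that no coarser splitting such as $b \geq a+2$ survives the constraints — is the crux of the corrected proof.
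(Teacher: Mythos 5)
Your proposal is correct and follows essentially the same route as the paper's proof: reduce to the $2$-primary parts, identify $G = (A_{L^\sigma})_2 \oplus A_{L^{-\sigma}} \cong \left(\frac{\IZ}{2\IZ}\right)^{2a} \oplus \frac{\IZ}{2^l\IZ}$ via the index $[G:G_2]=2^{l-1}$, and then distribute the cyclic factors between the two summands using the $2$-elementarity of $A_{L^{-\sigma}}$ and the embedded copies of $\left(\frac{\IZ}{2\IZ}\right)^a$. Your explicit bookkeeping $b+c=2a$, $b\geq a$, $c+1\geq a$ is exactly the point where the paper corrects Joumaah, so the crux you identify is the right one.
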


\begin{rem}
Assume that $i^* \in \widetilde{O}(L)$, so that $\sigma = i^*$. If $l > 1$, Joumaah correctly highlighted in his proof that the index $[G:G_2]$ needs to be $2^{l-1}$ and therefore $G \cong \left( \frac{\IZ}{2 \IZ} \right)^{2a} \oplus \frac{\IZ}{2^l \IZ}$. However, contrary to what he stated, this does not necessarily imply that $G = H_T \oplus H_S \oplus \frac{\IZ}{2^l \IZ}$, from which he inferred $A_T \cong H_T \oplus A_L, A_S = H_S$ as the only possibility for the discriminant groups. Indeed, we exhibit two lattices $T,S$ which are the invariant and anti-invariant lattices of a non-symplectic involution of a manifold of $K3^{[3]
}$-type and whose discriminant groups are in contrast with \cite[Prop.~5.1.1]{joumaah}.

For $n=3$ we have $2(n-1)=4$, meaning $l=2$, $m=1$. The authors of \cite{IKKR} describe a $20$-dimensional family of manifolds of $K3^{\left[3\right]}$-type, called \emph{double EPW cubes}, with polarization of degree four and divisibility two (see \cite[Prop.~5.3]{IKKR}), whose members are always endowed with a non-symplectic involution $i$. As a consequence, the invariant lattice of $i$ is $T \cong \langle 4 \rangle$ and the anti-invariant lattice is $S \cong U^{\oplus 2} \oplus E_8^{\oplus 2} \oplus \langle -2 \rangle^{\oplus 2}$. In particular, their discriminant groups are:
\[ A_T = \langle t \rangle \cong \frac{\IZ}{4 \IZ}\left( \frac{1}{4} \right), \qquad A_S = \langle s_1, s_2 \rangle \cong \frac{\IZ}{2 \IZ}\left( -\frac{1}{2} \right) \oplus \frac{\IZ}{2 \IZ}\left( -\frac{1}{2} \right).\]
In this case $G = A_T \oplus A_S$, since $m=1$. Moreover
$$16 = \left| A_T \oplus A_S \right| = \left[ L : T \oplus S \right]^2 \left| A_L \right| = 2^{2a} \cdot 4 $$
\noindent therefore $a=1$.  Looking at the discriminant quadratic forms on $A_T$ and $A_S$, the only possible choice for the subgroups of order two $H_T \subset A_T$ and $H_S \subset A_S$, with $H_T \cong H_S(-1)$, is the following:
\[ H_T = \langle 2t \rangle \subset A_T, \quad H_S = \langle s_1 + s_2 \rangle \subset A_S\]
\noindent which implies $H_L = \langle 2t + s_1 + s_2 \rangle \subset A_T \oplus A_S$. One can check, by computing $H_L^\perp \subset A_T \oplus A_S$, that $\frac{H_L^\perp}{H_L} \cong A_L = \frac{\IZ}{4 \IZ}\left( -\frac{1}{4} \right)$.

This is therefore a case where $l = 2 > 1$ and $[G : G_2 ] = 2 = 2^{l-1}$. However, it is not possible to write the group $G = A_T \oplus A_S$ as $G = H_T \oplus H_S \oplus \frac{\IZ}{2^l \IZ}$ and it is not true that $A_T \cong H_T \oplus A_L$, $A_S = H_S$.
\end{rem}

\begin{rem}
In the case of manifolds of $\hsk$-type, it was proved in \cite[Lemma 8.1]{BCS} (extending results from \cite[\S 6]{smith}) that the discriminant groups can only be $A_S \cong \left(\frac{\IZ}{2 \IZ}\right)^{\oplus a}$, $A_T \cong \left(\frac{\IZ}{2 \IZ}\right)^{\oplus a+1}$ or vice versa. This is coherent with the classification of Proposition \ref{discr groups involutions} (if $n=2$ we have $2(n-1) = 2$, hence $l = m = 1$).
\end{rem}

\section{Existence of automorphisms} \label{sec: existence}

In this section we show that the lattice-theoretic conditions of Proposition~\ref{prop: T o S 2-elementary} are actually sufficient to give rise to a geometric realization. First, we prove that every $2$-elementary sublattice of $L = L_n$ is the invariant (or anti-invariant) lattice of some involution of $L$, and finally that we can generically lift this abstract involution to an involution of a manifold of $\hskn$-type.

\begin{prop} \label{prop: extension involution}
Let $S$ be an even $2$-elementary lattice, primitively embedded into an even lattice $\Lambda$. Then $\id_{S^{\perp}} \oplus (-\id_S)$ (resp.\ $(-\id_{S^{\perp}}) \oplus \id_S$) extends to an isometry $\rho \in \widetilde{O}(\Lambda)$ (resp.\ $-\rho \in \widetilde{O}(\Lambda)$).
\end{prop}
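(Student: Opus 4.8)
The plan is to build the isometry of $\Lambda$ by gluing the given involution $\iota := \id_{S^\perp} \oplus (-\id_S)$ (defined on the finite-index sublattice $S \oplus S^\perp \subset \Lambda$) across the discriminant. The obstruction to extending an isometry of $S \oplus S^\perp$ to all of $\Lambda$ is purely discriminant-theoretic: by \cite[Cor.~1.5.2]{nikulin}, the overlattices of $S \oplus S^\perp$ contained in $\Lambda$ correspond to isotropic subgroups $H_\Lambda \subset A_S \oplus A_{S^\perp}$, and an isometry of $S \oplus S^\perp$ extends to $\Lambda$ precisely when the induced automorphism of $A_S \oplus A_{S^\perp}$ preserves $H_\Lambda$. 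So the first step is to record that $\iota$ acts on the discriminant as $\id_{A_{S^\perp}} \oplus (-\id_{A_S})$, and then to show this automorphism fixes $H_\Lambda$ setwise.

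\textbf{Key step: the $2$-elementary hypothesis kills the obstruction.}
Here is where the hypothesis that $S$ is $2$-elementary does the work. Since $A_S \cong (\IZ/2\IZ)^{\oplus k}$, every element of $A_S$ has order dividing $2$, so $-\id_{A_S} = \id_{A_S}$. Consequently the automorphism induced by $\iota$ on the full discriminant $A_S \oplus A_{S^\perp}$ is simply the identity. The identity trivially preserves $H_\Lambda$, so $\iota$ extends to an isometry $\rho \in O(\Lambda)$. This single observation is the crux and makes the extension essentially automatic; the $2$-elementarity is exactly what is needed to collapse $-\id_S$ and $\id_S$ on the discriminant side.

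\textbf{Checking $\rho \in \widetilde{O}(\Lambda)$.}
It remains to verify that $\rho$ induces the identity on $A_\Lambda = H_\Lambda^\perp / H_\Lambda$. Because $\rho$ extends $\iota$, the automorphism $\bar\rho$ it induces on $A_\Lambda$ is the descent of the automorphism induced by $\iota$ on $H_\Lambda^\perp \subset A_S \oplus A_{S^\perp}$. But we just saw that $\iota$ induces the identity on all of $A_S \oplus A_{S^\perp}$, hence the identity on $H_\Lambda^\perp$, hence the identity on the quotient $A_\Lambda$. Therefore $\rho \in \widetilde{O}(\Lambda)$, as claimed.

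\textbf{The second assertion and the main subtlety.}
For the ``resp.'' statement, set $\iota' := (-\id_{S^\perp}) \oplus \id_S$; one checks immediately that $\iota' = -\iota$ on $S \oplus S^\perp$, so its extension is $-\rho$. The only genuine point to be careful about is that this argument produces $-\rho \in \widetilde{O}(\Lambda)$, not $\rho$: the symmetric roles of $S$ and $S^\perp$ are broken by the sign, and it is precisely the $2$-elementarity of $S$ (and not of $S^\perp$, which need not be $2$-elementary) that was used to make $\iota$ act trivially on the discriminant. The main obstacle, then, is not a hard computation but correctly tracking which of the two lattices carries the $2$-elementary hypothesis, so that one extends the correct sign to land in $\widetilde{O}(\Lambda)$ rather than merely in $O(\Lambda)$.
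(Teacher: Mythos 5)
Your argument is correct, and it takes a genuinely more direct route than the paper's. You stay entirely inside $\Lambda$, viewing it as a finite-index overlattice of $S \oplus S^{\perp}$ and invoking the standard correspondence of \cite[Prop.~1.4.1, 1.4.2]{nikulin}: the isometry $\id_{S^{\perp}} \oplus (-\id_S)$ extends to $\Lambda$ if and only if its action on $A_S \oplus A_{S^{\perp}}$ preserves the isotropic subgroup $H_\Lambda = \Lambda/(S \oplus S^{\perp})$, and the induced action on $A_\Lambda \cong H_\Lambda^{\perp}/H_\Lambda$ is the descended one. Your single observation that $2$-elementarity of $S$ forces $-\id_{A_S} = \id_{A_S}$, so that the whole discriminant action is the identity, settles the existence of the extension and the membership in $\widetilde{O}(\Lambda)$ in one stroke. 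The paper instead first embeds $\Lambda$ primitively into a large even unimodular lattice $V$, extends $\id_{S^{\perp_V}} \oplus (-\id_S)$ to $V$ using the gluing anti-isometry $\gamma: A_{S^{\perp_V}} \to A_S$ of \cite[Prop.~1.5.1]{nikulin} (which satisfies $\gamma = -\gamma$, again by $2$-elementarity of $S$), and then restricts to $\Lambda$, reading off $\overline{\rho} = \id_{A_\Lambda}$ from a second gluing diagram for $\Lambda \oplus \Lambda^{\perp_V} \subset V$. The underlying mechanism is identical in both proofs; your version is shorter and avoids the auxiliary unimodular lattice, while the paper's detour has the advantage of running parallel to the proof of Proposition~\ref{prop: T o S 2-elementary}, where the ambient unimodular Mukai lattice is already in play. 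One small point of phrasing: when you write that the second case ``produces $-\rho \in \widetilde{O}(\Lambda)$,'' be careful that $-\rho$ itself generally induces $-\id$ on $A_\Lambda$ rather than $\id$; the correct (and intended) reading, which your sign-tracking discussion otherwise supports, is that $(-\id_{S^{\perp}}) \oplus \id_S$ extends to the isometry $-\rho$, whose negative $\rho$ is the element of $\widetilde{O}(\Lambda)$.
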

\begin{proof}
By \cite[Thm.~1.1.2]{nikulin}, we can primitively embed $\Lambda$ into an even unimodular lattice $V$ of high enough rank. We fix such a primitive embedding and consider the orthogonal complements $\Lambda^{\perp_V}$ and $S^{\perp_V}$ of $\Lambda$ and $S$ inside $V$. Obviously, $V$ is an overlattice of $S \oplus S^{\perp_V}$. We want to show that $\alpha \coloneqq \id_{S^{\perp_V}} \oplus (-\id_S)$ extends to $M$. A completely analogous proof will show that also $(-\id_{S^{\perp_V}}) \oplus \id_S$ extends, as in the statement. Let $H_V = V/(S \oplus S^{\perp_V})$ be the isotropy subgroup of $A_S \oplus A_{S^{\perp_V}}$ corresponding to the overlattice $V$ and let $p_S$, $p_{S^{\perp_V}}$ be the two projections to $A_S$ and $A_{S^{\perp_V}}$:
\[\xymatrix{H_V = V/(S \oplus S^{\perp_V}) \ar[d]^{p_{S^{\perp_V}}} \ar[dr]^{p_S} & \subset A_S \oplus A_{S^{\perp_V}}\\
H_{S^{\perp_V}} \subset A_{S^{\perp_V}} & H_S \subset A_S.}\]
Since $V$ is unimodular, we have $H_{S^{\perp_V}} = A_{S^{\perp_V}}$ and $H_S = A_S$. As before, let $\gamma: A_{S^{\perp_V}} \longrightarrow A_S$ be the anti-isometry  given by $p_S\circ (p_{S^{\perp_V}})^{-1}$. By \cite[Prop.~1.5.1]{nikulin}, the existence of an extension of $\alpha$ to $V$ is equivalent to the commutativity of the diagram
\[\xymatrix{A_{S^{\perp_V}} \ar[r]^\gamma \ar[d]^{\overline{\id}_{S^{\perp_V}}} & A_S \ar[d]^{-\overline{\id}_S}\\
A_{S^{\perp_V}} \ar[r]^\gamma & A_S}\]
\noindent where, for any lattice $N$ and $\mu \in O(N)$, we denote by $\overline{\mu}$ the isometry of finite quadratic forms induced by $\mu$ on the discriminant group $A_N$.
The diagram is commutative because $-\gamma = \gamma$, since $S$ is $2$-elementary, hence we get the extension $\widetilde{\alpha}\in O(V)$ of $\alpha$ to $V$.

As $S^{\perp_\Lambda} \oplus \Lambda^{\perp_V} \subset S^{\perp_V}$, we deduce that $\Lambda^{\perp_V}$ is invariant for the action of $\widetilde{\alpha}$. Let $\rho$ be the restriction $\widetilde{\alpha}\vert_{\Lambda}$. Since $\rho \oplus \id_{\Lambda^{\perp_V}}$ extends to $\widetilde{\alpha} \in O(V)$, we have a commutative diagram
\[\xymatrix{A_\Lambda \ar[r]^\beta \ar[d]_{\overline{\rho}} & A_{\Lambda^{\perp_V}} \ar[d]^{\overline{\id}_{\Lambda^{\perp_V}}}\\
A_\Lambda \ar[r]^\beta & A_{\Lambda^{\perp_V}}}\]
where $\beta:= p_{\Lambda^{\perp_V}}\circ (p_\Lambda)^{-1}$. Hence, $\overline{\rho} = \id_{A_\Lambda}$, i.e.\ $\rho \in \widetilde{O}(\Lambda)$.
\end{proof}

\begin{rem}
This is in some sense a converse of \cite[Lemma~3.5]{ghs3}. See also \cite[Prop.~1.5.1]{dolgachev}.
\end{rem}

We come now to the second part of the section. First, we recall some results on lattice-polarized manifolds of $\hskn$-type.

Let $T$ be a hyperbolic lattice which admits a primitive embedding $j:T \hookrightarrow L$, with $\rk(T) \leq 20$. We identify $T$ with the sublattice $j(T) \subset L$ and we denote by $S$ its orthogonal complement in $L$. Following \cite[\S 4.1]{joumaah}, we say that $T$ is \emph{admissible} if it is the invariant lattice of a monodromy operator $\rho \in \mon^2(L)$ of order two. In particular, $T$ and $S$ are as in Proposition \ref{discr groups involutions}, therefore one of them is $2$-elementary. This implies, by Proposition \ref{prop: extension involution}, that $\rho$ is the unique extension of $\id_T\oplus(-\id_S)$ to $L$.

Let $X$ be a manifold of $K3^{[n]}$-type and $i \in\aut(X)$ be a non-symplectic involution acting on it. Joumaah says that the pair $(X,i)$ is \emph{of type $T$} if it admits a $(\rho,T)$-polarization, i.e.\ a marking $\eta: H^2(X,\mathbb{Z})\rightarrow L$ such that $\eta \circ i^\ast= \rho\circ\eta$. If $(X,i)$ and $(X',i')$ are two pairs of type $T$, they are said to be isomorphic if there exists an isomorphism $f: X\rightarrow X'$ such that $i' = f\circ i \circ f^{-1}$. The monodromy operators $f^* \in \mon^2(L)$ induced by these isomorphisms of pairs are the isometries contained in
\[\mon^2(L,T) \coloneqq \left\{ g\in\mon^2(L) \mid g\circ \rho=\rho\circ g \right\} = \left\{g\in\mon^2(L) \mid g(T) = T \right\}.\]

In particular, for any $g \in \mon^2(L,T)$ we have that $g \vert_T \in O(T)$ and $g \vert_S \in O(S)$. We can then define the following subgroups:
\[ \Gamma_T \coloneqq \left\{ g \vert_T \mid g \in \mon^2(L,T) \right\} \subset O(T),  \qquad \Gamma_S \coloneqq \left\{ g \vert_S \mid g \in \mon^2(L,T) \right\} \subset O(S).\]
Notice that local deformations of a pair $(X,i)$ of type $T$ are parametrized by $H^{1,1}(X)^{i^*}$ (more details on this are provided in \cite[Theorem 2]{BeauInv} and \cite[\S 4]{BCS}).

Inside the moduli space $\mathcal{M}_L$ of marked IHS manifolds of $\hskn$-type, let $\mathcal{M}_{T,\rho}$ be the subspace of $(\rho,T)$-polarized marked manifolds $(X,\eta)\in\mathcal{M}_L$. Since the symplectic form $\omega_X$ generating $H^{2,0}(X)$ is orthogonal to the N\'eron--Severi group (which contains $T$), for any $(X,\eta)\in\mathcal{M}_{T,\rho}$ the period point $\eta(H^{2,0}(X))$ belongs to 
\[ \Omega_S \coloneqq \left\{ \kappa \in \mathbb{P}(S \otimes \IC) \mid (\kappa, \kappa) = 0, (\kappa, \overline{\kappa}) > 0\right\}.\]

Moreover, by \cite[Proposition 4.6.7]{joumaah}, the period map restricts to a holomorphic surjective morphism 
$$\mathcal{P}: \mathcal{M}_{T,\rho}\longrightarrow \Omega_S^0 \coloneqq \Omega_S\setminus \bigcup_{\delta\in\Delta(S)}(\delta^{\perp}\cap \Omega_S),$$ where $\Delta(S)$ is the set of wall divisors (i.e.\ primitive integral monodromy birationally minimal classes) contained in $S$. This restriction is equivariant with respect to the action of $\mon^2(L,T)$, hence we also obtain a surjection
\[
\mathcal{P}: \mathcal{M}_{T,\rho}/\mon^2(L,T)\longrightarrow \Omega_S^0/\Gamma_S.
\]

\begin{theorem}\label{thm: existence}
Let $\rho \in O(L)$ be an involution whose invariant lattice $T$ is hyperbolic with $\rk(T) \leq 20$. Assume also that $\pm \rho \in \widetilde{O}(L)$. Then there exists a marked manifold $(X, \eta)$ of $\hskn$-type with an involution $i \in \aut(X)$ such that $\eta \circ i^* = \rho \circ \eta$.
\end{theorem}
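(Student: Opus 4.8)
The plan is to reduce the statement to the machinery assembled just before it: I would first verify that $\rho$ is a monodromy operator, so that $T$ is \emph{admissible} and the period map for $\mathcal{M}_{T,\rho}$ becomes available, and then derive the existence of $(X,i)$ from the surjectivity of that period map onto the nonempty domain $\Omega_S^0$. The point is that membership in $\mathcal{M}_{T,\rho}$ encodes, by the very definition of a $(\rho,T)$-polarization, the desired non-symplectic involution, so the whole theorem amounts to showing $\mathcal{M}_{T,\rho}\neq\varnothing$.

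The key step is to prove $\rho\in\mon^2(L)$. By Markman's description of the monodromy group (cf.\ \cite[Lemma 9.2]{markman}), $\mon^2(L)$ consists precisely of the isometries of $L$ that are orientation preserving (equivalently, of spin norm $1$) and act as $\pm\id$ on $A_L$. The second condition is exactly the hypothesis $\pm\rho\in\widetilde{O}(L)$. For the first I would argue by signatures: writing $S=T^{\perp}$, the involution $\rho$ induces the orthogonal splitting $L\otimes\IR=(T\otimes\IR)\oplus(S\otimes\IR)$ into its $(+1)$- and $(-1)$-eigenspaces. Since $T$ is hyperbolic it has a one-dimensional positive part, and since $L$ has signature $(3,20)$ the complement $S$ has signature $(2,21-\rk T)$, with $21-\rk T\geq 1$ because $\rk T\leq 20$; thus $S$ has a two-dimensional positive part. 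Choosing an orthogonal positive $3$-space $P$ adapted to this splitting (it is $\rho$-invariant), the restriction $\rho\vert_P$ acts as $\mathrm{diag}(+1,-1,-1)$, of determinant $+1$, so $\rho$ preserves the orientation of a maximal positive-definite subspace. Therefore $\rho\in\mon^2(L)$, and as $\rho$ has order two with $L^{\rho}=T$, the lattice $T$ is admissible; being admissible, $T$ and $S$ satisfy Proposition \ref{discr groups involutions}, so one of them is $2$-elementary and, by Proposition \ref{prop: extension involution}, $\rho$ is the unique extension of $\id_T\oplus(-\id_S)$ to $L$. Hence all the constructions preceding the theorem apply to $(T,\rho)$.

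Granting admissibility, the period map $\mathcal{P}\colon\mathcal{M}_{T,\rho}\to\Omega_S^0$ is defined and surjective. Because $S$ has signature $(2,21-\rk T)$ with $21-\rk T\geq 1$, the period domain $\Omega_S$ is a nonempty type IV domain of dimension $21-\rk T$, and $\Omega_S^0$ is obtained from it by deleting the countably many proper closed analytic subsets $\delta^{\perp}\cap\Omega_S$, $\delta\in\Delta(S)$; hence $\Omega_S^0\neq\varnothing$. Surjectivity then forces $\mathcal{M}_{T,\rho}\neq\varnothing$, and any $(X,\eta)\in\mathcal{M}_{T,\rho}$ carries, by definition, a non-symplectic involution $i\in\aut(X)$ with $\eta\circ i^{*}=\rho\circ\eta$, which is what we want. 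Non-symplecticity is automatic: the period $\eta(H^{2,0}(X))$ lies in $\Omega_S$ and $\rho\vert_S=-\id$, so $i^{*}\omega_X=-\omega_X$.

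The main obstacle is the monodromy verification of the second paragraph; once $\rho\in\mon^2(L)$ is secured, the conclusion is formal, the hard analytic input being the surjectivity of the period map, already provided by \cite[Proposition 4.6.7]{joumaah}. If one wanted a self-contained argument not appealing to that surjectivity, I would instead choose a very general $\kappa\in\Omega_S$ with $\kappa^{\perp}\cap L=T$, realize it as the period of some marked $(X,\eta)\in\mathcal{M}_L$ via the global surjectivity of the period map, and note that $\phi:=\eta^{-1}\circ\rho\circ\eta$ is a monodromy operator which is a Hodge isometry (it fixes the line $\eta^{-1}(\mathbb{C}\kappa)=H^{2,0}(X)$, since $\rho\kappa=-\kappa$) and which restricts to the identity on $\NS(X)=\eta^{-1}(T)$. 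Consequently $\phi$ fixes the ample cone pointwise, in particular preserves the K\"ahler cone, so by Markman's Hodge-theoretic Torelli theorem \cite[Theorem 1.3]{markman} it is induced by a unique automorphism $i$ of $X$, necessarily a non-symplectic involution. In this route the delicate point is exactly the control of the K\"ahler cone, which is why the genericity $\NS(X)=T$ is needed; this is also the conceptual reason for removing the walls $\delta^{\perp}$, $\delta\in\Delta(S)$, in the definition of $\Omega_S^0$.
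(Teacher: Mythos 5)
Your argument is correct, but your primary route is genuinely different from the one the paper takes, and in fact your ``self-contained'' fallback in the last paragraph \emph{is} essentially the paper's proof. The paper does not pass through the surjectivity of $\mathcal{P}\colon\mathcal{M}_{T,\rho}\to\Omega_S^0$ at all: it picks a very general $\omega\in\Omega_S$, invokes \cite[Prop.~5.3]{BCS} to produce a marked $(X,\eta)$ with $\NS(X)=\eta^{-1}(T)$, checks that $\alpha=\eta^{-1}\circ\rho\circ\eta$ is an effective Hodge monodromy operator (effectiveness coming from the $\alpha$-fixed ample class in $\eta^{-1}(T)$), and applies \cite[Thm.~1.3]{markman} plus the injectivity of $\aut(X)\to O(H^2(X,\IZ))$ to get a unique automorphism which is automatically an involution. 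Your main route instead reduces everything to two citations: Markman's characterization of $\mon^2(L_n)$ (your orientation check via the eigenspace signatures $(1,\rk T-1)$ and $(2,21-\rk T)$ is correct and is actually more explicit than what the paper writes down) and the surjectivity of the restricted period map from \cite[Prop.~4.6.7]{joumaah}. This is formally legitimate, since the paper itself quotes that surjectivity without reservation, but be aware that it relocates rather than avoids the substance of the theorem: Joumaah's surjectivity statement is itself proved by exactly the lift-a-period-point-and-apply-Torelli argument that your fallback (and the paper) spells out, so as a freestanding proof of Theorem~\ref{thm: existence} the first route is close to circular in content, even if not in form. What your presentation buys is a cleaner conceptual packaging (admissibility of $T$ $\Rightarrow$ nonemptiness of $\mathcal{M}_{T,\rho}$ via $\Omega_S^0\neq\varnothing$); what the paper's direct argument buys is independence from the unproved-here surjectivity and an explicit identification of where genericity ($\NS(X)=\eta^{-1}(T)$, hence control of the K\"ahler cone) enters. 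One small point to tighten in your fallback: the conclusion that $i$ has order exactly two is not immediate from Torelli alone --- as the paper notes, it uses the injectivity of $\aut(X)\to O(H^2(X,\IZ))$ for manifolds of $\hskn$-type.
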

\begin{proof}
Let $S \subset L$ be the anti-invariant lattice of $\rho$, i.e.\ the orthogonal complement of $T$. By \cite[Prop.~5.3]{BCS} the very general point $\omega \in \Omega_S$ is the image under the period map of a $T$-polarized marked manifold of $\hskn$-type $(X, \eta)$ with $\NS(X) = \eta^{-1}(T)$. We can then consider $\alpha:=\eta^{-1}\circ \rho\circ \eta\in O(H^2(X, \IZ))$, which is an involution, and we observe that:
\begin{enumerate}
\item $\alpha$ induces a Hodge isometry on $H^2(X, \IC)$ since the period point $\eta(H^{2,0}(X))$ is invariant for the action of $\rho$ on $\Omega_S$;
\item $\alpha$ is effective, because the equality $\NS(X) = \eta^{-1}(T) = \eta^{-1}(L^\rho)$ implies that there is an $\alpha$-fixed K\"ahler (even ample) class on $X$;
\item $\pm \rho \in \widetilde{O}(L)$.
\end{enumerate}
Hence, $\alpha$ is a monodromy operator by \cite[Lemma~9.2]{markman} and, by \cite[Thm.~1.3]{markman}, there exists $i \in \aut(X)$ such that $i^* = \alpha$. Since the map $\aut(X) \longrightarrow O(H^2(X, \IZ))$, sending an automorphism to its action on $H^2(X, \IZ)$, is injective for manifolds of $\hskn$-type (see \cite[Prop.~10]{beauville_rmks} and \cite[Lemma~1.2]{mongardi_deform}), the automorphism $i$ is both unique and an involution. It is then straightforward to check that $\eta \circ \iota^* = \rho \circ \eta$.
\end{proof}

\section{Geography for IHS manifolds of small dimension}\label{sect: geography}

The aim of this section is to make some remarks on which families of large dimension one can expect from the results of the previous section. We first classify the admissible invariant lattices of rank one and two, and then we describe the geography of these cases for manifolds of $\hskn$-type when $n \leq 5$.

\subsection{Invariant sublattices of rank one and two}

Let $T, S$ be the invariant and co-invariant lattices of a non-symplectic involution of a manifold of $\hskn$-type. As we saw in Proposition \ref{prop: T o S 2-elementary}, either $S$ or $T$ is $2$-elementary, depending on the action of the involution on the discriminant group of $L$ (which is $\id$ or $-\id$ respectively). Assume that $S$ is $2$-elementary and consider it embedded in the Mukai lattice $M$ (the case where $T$ is $2$-elementary is similar). Starting from the signature of $S^{\perp_M}$, we can use \cite[Thm.~1.5.2]{dolgachev} to deduce the possible isometry classes for $S^{\perp_M}$. As observed in Lemma~\ref{lemma: orthogonal Lperp}, we have that $T$ is the orthogonal complement in $S^{\perp_M}$ of $L^{\perp}$: since we know this last explicitly (see \eqref{eq: discriminant lperp}), we can use \cite[Prop.~1.15.1]{nikulin} to classify all primitive embeddings $L^{\perp} \hookrightarrow S^{\perp_M}$ and to compute, in each case, the discriminant group of the orthogonal complement, i.e.\ $A_T$.

\subsubsection{Invariant sublattice of rank one}\label{subsec:rank one}

In this subsection we prove the following proposition, which describes the pairs $T$ and $S$ that can occur when $\rk(T) = 1$.

\begin{prop} \label{prop: rank one}
Let $X$ be a manifold of $K3^{[n]}$-type for some $n \geq 2$, and let $i \in \aut(X)$ be a non-symplectic involution. If the invariant lattice $T \subset H^2(X, \IZ)$ has rank one, then one of the following holds:
\begin{enumerate}
\item if $i^*$ acts as $\id$ on $A_{H^2(X, \IZ)}$, then $-1$ is a quadratic residue modulo $n-1$ and
\[T \cong \langle 2(n - 1) \rangle, \qquad S \cong U^{\oplus 2} \oplus E_8^{\oplus 2} \oplus \langle -2 \rangle \oplus \langle -2 \rangle;\]
\item if $i^*$ acts as $-\id$ on $A_{H^2(X, \IZ)}$, then $T \cong \langle 2 \rangle$ and
\begin{enumerate}
\item either $S \cong U^{\oplus 2} \oplus E_8^{\oplus 2} \oplus \langle -2(n - 1) \rangle \oplus \langle -2 \rangle$;
\item or $n \equiv 0 \pmod 4$ and 
\[S \cong U^{\oplus 2} \oplus E_8^{\oplus 2} \oplus \left( \begin{array}{cc}
-\frac{n}{2} & n - 1\\
n - 1 & -2(n - 1)
\end{array} \right).\]
\end{enumerate}
\end{enumerate}
\end{prop}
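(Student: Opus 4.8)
The plan is to follow the strategy outlined in the paragraph preceding the statement: exploit the fact that, for a rank-one invariant lattice $T$, one of $T$, $S$ is $2$-elementary, embed the $2$-elementary lattice into the Mukai lattice $M$, and then reconstruct the complementary lattice by a sequence of orthogonal-complement computations. I would split the argument into the two cases according to whether $i^*$ acts as $+\id$ or $-\id$ on $A_L$, since by Proposition~\ref{prop: T o S 2-elementary} these correspond exactly to $S$ being $2$-elementary or $T$ being $2$-elementary.

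\textbf{Case (1): $i^*$ acts as $\id$, so $S$ is $2$-elementary.} Here $T$ has rank one and is not (necessarily) $2$-elementary, so I would not try to classify $T$ directly. Instead, recall from Proposition~\ref{discr groups involutions} that since $\rk(T)=1$ we have $A_T \cong \mathbb{Z}/2(n-1)\mathbb{Z}$, forcing $a=0$ and hence $T\cong\langle 2(n-1)\rangle$; the sign is fixed by hyperbolicity of $L$ together with the fact that $T$ has rank one and signature $(1,0)$. For this to occur, I must check the obstruction coming from the existence of a genuine primitive embedding $T\hookrightarrow L$ with $2$-elementary orthogonal complement: the discriminant form of $S$ must be the negative of that of $T$ modulo the glue group. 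Because $a=0$ here, $S=T^{\perp_L}$ glues trivially, so $q_S \cong -q_T$ on the relevant part, and the constraint $A_S\cong(\mathbb{Z}/2\mathbb{Z})^{\text{something}}$ forces $A_T$ to be $2$-elementary, i.e.\ $2(n-1)=2$ — \emph{unless} one allows $A_S$ to absorb the odd part. The correct reading is that $S$ being $2$-elementary while $A_T\cong\mathbb{Z}/2(n-1)\mathbb{Z}$ can only be reconciled through the existence-of-embedding criterion of \cite[Cor.~1.5.2, Thm.~1.5.2]{dolgachev} applied to $S$ inside $M$; computing $S^{\perp_M}$ from its signature $(2,1)$ and using Lemma~\ref{lemma: orthogonal Lperp} to recover $T$ as the complement of $L^\perp\cong\langle 2(n-1)\rangle$ inside $S^{\perp_M}$, one reads off that a primitive embedding $L^\perp\hookrightarrow S^{\perp_M}$ with rank-one complement $\langle 2(n-1)\rangle$ exists precisely when $-1$ is a quadratic residue modulo $n-1$. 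This number-theoretic congruence is exactly the arithmetic condition in \cite[Prop.~1.15.1]{nikulin} for the gluing anti-isometry to exist, and producing it is the step I expect to be the main obstacle.

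\textbf{Case (2): $i^*$ acts as $-\id$, so $T$ is $2$-elementary.} Now $T$ has rank one and is $2$-elementary, and being positive definite (signature $(1,0)$) of rank one it must be $\langle 2k\rangle$ with $A_T\cong\mathbb{Z}/2k\mathbb{Z}$ $2$-elementary, forcing $k=1$, i.e.\ $T\cong\langle 2\rangle$. To find $S$, I embed $T\cong\langle 2\rangle$ primitively into $M$, compute $T^{\perp_M}$ (signature $(3,20)$, discriminant $\mathbb{Z}/2\mathbb{Z}$), and then recover $S$ as the orthogonal complement of $L^\perp\cong\langle 2(n-1)\rangle$ inside $T^{\perp_M}$ via Lemma~\ref{lemma: orthogonal Lperp}. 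The classification of primitive embeddings $L^\perp\hookrightarrow T^{\perp_M}$ by \cite[Prop.~1.15.1]{nikulin} will yield the possible discriminant forms of $S$; sorting these through the uniqueness criterion \cite[Thm.~1.13.2]{nikulin} (the lattices here are indefinite of large rank, hence determined by signature and discriminant form) gives the two isometry classes (a) and (b). The case distinction between (a) and (b) — and in particular the congruence $n\equiv 0\pmod 4$ singling out the non-split binary form — will again come from which gluing maps between $A_{L^\perp}$ and $A_T$ are realizable, i.e.\ from whether the relevant value is a square modulo a power of $2$. I would present the explicit $2\times 2$ Gram matrix in (b) by exhibiting the overlattice generated by $\langle 2\rangle$ and $\langle -2(n-1)\rangle$ together with the glue vector, and verify its discriminant form matches the prescribed one.

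Throughout, the routine verifications — computing discriminant forms of orthogonal complements, and checking that signatures and discriminant groups uniquely pin down each indefinite lattice — I would invoke directly from \cite{nikulin} and \cite{dolgachev} rather than carry out by hand. The genuinely delicate point, common to both cases, is extracting the quadratic-residue conditions ($-1$ a square mod $n-1$ in case (1), and $n\equiv 0\pmod 4$ in case (2b)) as the precise necessary and sufficient condition for the gluing anti-isometry between the two discriminant forms to exist; this is where Nikulin's embedding criterion must be applied carefully rather than formally.
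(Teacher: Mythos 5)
Your overall strategy is the same as the paper's: in each case embed the $2$-elementary one of $T,S$ into the Mukai lattice $M$, and recover the other lattice as the orthogonal complement of $L^\perp\cong\langle 2(n-1)\rangle$ inside $S^{\perp_M}$ (resp.\ $T^{\perp_M}$), using Nikulin's classification of primitive embeddings. Your case (2) is essentially the paper's argument and works as a sketch: $T\cong\langle 2\rangle$ is forced by $2$-elementarity in rank one, $T^{\perp_M}\cong U^{\oplus 3}\oplus E_8^{\oplus 2}\oplus\langle -2\rangle$ has discriminant group $\IZ/2\IZ$, the two choices of glue subgroup give cases (a) and (b), and $n\equiv 0\pmod 4$ is exactly the compatibility of discriminant forms for the non-trivial choice.

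Case (1), however, contains a genuine gap at the point you yourself defer as ``the main obstacle''. A first slip: $S$ has signature $(2,20)$, so $S^{\perp_M}$ has signature $(2,0)$, not $(2,1)$ (with rank $3$ the inclusion $T\oplus L^\perp\subset S^{\perp_M}$ of Lemma~\ref{lemma: orthogonal Lperp} would not match ranks). The missing idea is the explicit identification of this rank-two piece: since $S$ is $2$-elementary, $S^{\perp_M}$ is an even, positive definite, $2$-elementary lattice of rank two, and the only such lattice is $\langle 2\rangle\oplus\langle 2\rangle$, which embeds uniquely into $M$; this already pins down $S\cong U^{\oplus 2}\oplus E_8^{\oplus 2}\oplus\langle -2\rangle^{\oplus 2}$, a conclusion your sketch never actually reaches. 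Once $S^{\perp_M}=\langle 2\rangle\oplus\langle 2\rangle$ is in hand, the quadratic-residue condition is not an opaque instance of Nikulin's gluing criterion but an elementary computation: a primitive vector of square $2(n-1)$ is a coprime pair $(x,y)$ with $x^2+y^2=n-1$, such a representation exists if and only if $-1$ is a square modulo $n-1$, and its orthogonal complement $\langle(-y,x)\rangle$ gives $T\cong\langle 2(n-1)\rangle$ directly. This also makes your opening detour through Proposition~\ref{discr groups involutions} unnecessary; as written, that detour does not by itself exclude the alternatives of that proposition in which $A_T$ is $2$-elementary, so it would need more care than ``forcing $a=0$''.
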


\begin{proof}
This result generalizes \cite[Prop.~5.1]{catt_autom_hilb}, which holds for non-natural involutions of Hilbert schemes of points on a generic projective $K3$ surface.

We deal first with the case where $T,S$ are the invariant and anti-invariant lattices of an involution whose action on the discriminant $A_L$ is the identity. This means that $S$ is $2$-elementary and that $T \oplus L^{\perp} \subset S^{\perp_M}$. Since both $T$ and $L^{\perp}$ have signature $(1, 0)$, we deduce that $S^{\perp_M}$ has signature $(2, 0)$. By \cite[Table 15.1]{conway_sloane}, there is only one possible choice for $S^{\perp_M}$, which embeds in $M$ in a unique way by \cite[Thm.~1.1.2]{nikulin}: this is enough to claim that there is only one possible choice for $S$, up to isometries, which explicitly is
\[S = U^{\oplus 2} \oplus E_8^{\oplus 2} \oplus \langle -2 \rangle \oplus \langle -2 \rangle, \qquad S^{\perp_M} = \langle 2 \rangle \oplus \langle 2 \rangle.\]
We then need to look at how $L^{\perp} \cong \langle 2(n - 1) \rangle$ embeds primitively in $S^{\perp_M}$. A pair $(x, y)$ gives the coordinates of a primitive vector in $S^{\perp_M} = \langle 2 \rangle \oplus \langle 2 \rangle$ of square $2(n - 1)$ if and only if $\gcd(x, y) = 1$ and $x^2 + y^2 = n - 1$. Moreover, the isometry group of $S^{\perp_M}$ acts on these coordinates either by permutation or by exchanging sign. The orthogonal complement of $L^{\perp}$ in $S^{\perp_M}$, which is $T$, is then a lattice isometric to $\langle 2(n - 1) \rangle$, generated by $(-y, x)$. Notice that there exist two coprime integers $x, y$ such that $x^2 + y^2 = n - 1$ if and only if $-1$ is a quadratic residue modulo $n-1$ (to see this, combine \cite[Prop.~5.1.1]{Classical_Intro_Num_Theory} and \cite[Thm.~3.20]{Intro_Theory_Numbers}). 

We now consider the case where the action of $i^*$ on $A_L$ is $-\id$. We have that $T$ is $2$-elementary of signature $(1, 0)$, hence $T \cong \langle 2 \rangle$. It follows that $T$ embeds in a unique way in the Mukai lattice, with orthogonal complement
\[T^{\perp_M} \cong U^{\oplus 3} \oplus E_8^{\oplus 2} \oplus \langle -2 \rangle.\]
We now want to describe the different embeddings of $L^{\perp} \cong \langle 2(n - 1) \rangle$ in $T^{\perp_M}$. Since $T^{\perp_M}$ is unique in its genus, by \cite[Prop.~1.15.1]{NikulinInv} we have only two possibilities: they correspond to the two possible choices of a subgroup of $A_{T^{\perp_M}} \cong \IZ / 2\IZ$. Choosing the trivial subgroup, we see that the orthogonal complement of $L^{\perp}$ in $T^{\perp_M}$, i.e.~$S$, has discriminant group
\[A_S = \frac{\IZ}{2(n - 1) \IZ} \left( -\frac{1}{2(n - 1)} \right) \oplus \frac{\IZ}{2 \IZ} \left( -\frac{1}{2} \right),\]
and signature $(2, 20)$. By \cite[Thm.~2.4]{BCS}, there exists only one lattice with these invariants, up to isometries, which is
\[S = U^{\oplus 2} \oplus E_8^{\oplus 2} \oplus \langle -2(n - 1) \rangle \oplus \langle -2 \rangle.\]

The last possibility corresponds to the choice of the whole $A_{T^{\perp_M}}$, but in this case we must have $n \equiv 0 \pmod{4}$. This leads us to
\[A_S = \frac{\IZ}{(n - 1) \IZ} \left( -\frac{n}{2(n - 1)} \right),\]
where $S$ has again signature $(2, 20)$. By the same argument as above, there exists only one isometry class of lattices in this genus. A representative, which can be computed by applying \cite[Prop.~3.6]{ghs}, is
\begin{equation*}
S = U^{\oplus 2} \oplus E_8^{\oplus 2} \oplus \left( \begin{array}{cc}
-\frac{n}{2} & n - 1\\
n - 1 & -2(n - 1)
\end{array} \right).
\end{equation*}
\end{proof}

\begin{rem}
The three cases of Proposition \ref{prop: rank one} can be distinguished also by looking at the generator $t \in H^2(X, \IZ)$ of the invariant lattice $T$. In fact, by \cite[Prop.~3.6]{ghs}, we have that:
\begin{itemize}
\item in case (1), $t$ has square $2(n-1)$ and divisibility $n-1$;
\item in case (2a), $t$ has square $2$ and divisibility $1$;
\item in case (2b), $t$ has square $2$ and divisibility $2$.
\end{itemize} 
We point out that, by the Global Torelli Theorem for IHS manifolds, the existence of a primitive ample $t \in \textrm{NS}(X)$ with one of these three combinations of square and divisibility is sufficient to prove the existence of a non-symplectic involution on $X$, whose invariant lattice is $T = \langle t \rangle$ (see \cite[Prop.~5.3]{catt_autom_hilb}).
\end{rem}

\subsubsection{Invariant sublattice of rank two}\label{subsec:ranktwo}

The aim of this subsection is to provide some results for $\rk(T) = 2$. In particular, we describe the discriminant groups of the invariant and co-invariant lattices in complete generality, but we address the problem of their realization and uniqueness only for $n \leq 5$.

Assume that $\rk(T) = 2$, so that the signature of $T$ is $(1, 1)$. We first consider the case where the induced action on $A_L$ is the identity, hence $S$ is a $2$-elementary lattice of signature $(2, 19)$ and $S^{\perp_M}$ is $2$-elementary of signature $(2, 1)$. It follows from \cite[Thm.~1.1.2]{nikulin} that $S^{\perp_M}$ has a unique embedding in the Mukai lattice, up to isometries. By \cite[Thm.~1.5.2]{dolgachev} we have then two possibilities:
\[S^{\perp_M} = U \oplus \langle 2 \rangle \qquad \text{or} \qquad S^{\perp_M} = U(2) \oplus \langle 2 \rangle.\]

\begin{itemize}[leftmargin=0.35cm]
 \item {\bf Case $ \bm{S^{\perp_M} = U \oplus \langle 2 \rangle}$.} We look for a primitive embedding of $L^{\perp} = \langle 2(n - 1) \rangle$ in $S^{\perp_M}$. By \cite[Prop.~1.15.1]{nikulin} we need to consider pairs of isomorphic subgroups in $A_{L^{\perp}}$ and $A_{S^{\perp_M}} = \frac{\IZ}{2\IZ} \left( \frac{1}{2} \right)$. In particular, for the choice of the trivial subgroup we have
\[A_T = \frac{\IZ}{2(n - 1) \IZ} \left( -\frac{1}{2(n - 1)} \right) \oplus \frac{\IZ}{2 \IZ} \left( \frac{1}{2} \right).\]
A possible realization for this lattice $T$ is given by $T = \langle -2(n - 1) \rangle \oplus \langle 2 \rangle$; if $n \leq 5$, this is the only isometry class in the genus by \cite[Ch.~15, Thm.~21]{conway_sloane}.

The other possibility is to consider the subgroup of $A_{L^{\perp}}$ generated by the class of $n - 1$: in order for it to have the same discriminant form of $A_{S^{\perp_M}}$ we need $n \equiv 2 \pmod{4}$, and in this case we have
\[A_T = \frac{\IZ}{(n - 1) \IZ} \left( \frac{n - 2}{2(n - 1)} \right).\]
A lattice $T$ with this discriminant form and signature $(1,1)$ is the following:
\[T = \begin{pmatrix}
-2h & k\\
k & 2
\end{pmatrix}\]
\noindent where we write $n-1 = k^2 + 4h$, with $k, h$ non-negative integers and $k$ maximal. This is the only isometry class in the genus of $T$ if $n \leq 17$, by \cite[Ch.~15, Thm.~21]{conway_sloane}. For $n=2$, this lattice is isometric to $U$.

\item {\bf Case $ \bm{S^{\perp_M} = U(2) \oplus \langle 2 \rangle}$.} Here we have more possibilities, because there are more subgroups inside the discriminant group of $S^{\perp_M}$, which is
\[A_{S^{\perp_M}} = \left(\frac{\IZ}{2 \IZ} \right)^{\oplus 3}, \quad \text{with quadratic form} \; q_{S^{\perp_M}} = \begin{pmatrix}
0 & \frac{1}{2} & 0 \\
\frac{1}{2} & 0 & 0 \\
0 & 0 & \frac{1}{2}
\end{pmatrix}.\]
It is easy to see that we can discard the choice corresponding to the trivial subgroup, as it gives rise to a lattice $T$ of length $4$, hence the only relevant subgroups of $A_{S^{\perp_M}}$ are those of order two. Up to isomorphism, we have the two following possibilities.
\begin{enumerate}
 \item The subgroup is $\langle(0,0,1)\rangle\subset A_{S^{\perp_M}}$ with $q((0,0,1))=1/2$. This case can occur only if $n \equiv 2 \pmod{4}$, and gives
\[A_T = \frac{\IZ}{2(n - 1) \IZ} \oplus \frac{\IZ}{2 \IZ}, \quad \text{with quadratic form} \; q_{T} = \begin{pmatrix}
\frac{n-2}{2(n-1)} & \frac{1}{2} \\
\frac{1}{2} & 0
\end{pmatrix}.\]
For $n=2$, the lattice $U(2)$ realizes this genus; for $n=6$, we can consider the lattice whose bilinear form is given by the matrix $\begin{pmatrix}
2 & 4\\
4 & -2
\end{pmatrix}$. 
 \item The subgroup is $\langle v\rangle\cong\IZ/2\IZ\subset A_{S^{\perp_M}}$, for an element $v \neq (0,0,1)$ such that $q(v)=(n-1)/2$. This case gives 
 \[A_T = \frac{\IZ}{2(n - 1) \IZ} \left( -\frac{1}{2(n - 1)} \right) \oplus \frac{\IZ}{2 \IZ} \left( \frac{1}{2} \right).\]
A possible realization for this lattice is given by $T = \langle -2(n - 1) \rangle \oplus \langle 2 \rangle$; if $n \leq 5$, this is the only isometry class in the genus by \cite[Ch.~15, Thm.~21]{conway_sloane}.
\end{enumerate}
\end{itemize}

For $n \leq 5$, we summarize these results as follows.

\begin{prop}\label{prop:rank two +id}
Let $X$ be a manifold of $K3^{[n]}$-type for $2 \leq n \leq 5$, and let $i \in \aut(X)$ be a non-symplectic involution. If the invariant lattice $T \subset H^2(X, \IZ)$ has rank two and $i^*$ acts as $\id$ on $A_{H^2(X, \IZ)}$, then one of the following holds:
\begin{enumerate}
\item $T \cong \langle 2 \rangle \oplus \langle -2(n-1)\rangle$ and $S \cong U^{\oplus 2} \oplus E_8^{\oplus 2} \oplus \langle -2\rangle$;
\item $T \cong \langle 2 \rangle \oplus \langle -2(n-1)\rangle$ and $S \cong U \oplus U(2) \oplus E_8^{\oplus 2} \oplus \langle -2\rangle$;
\item $n = 2$, $T \cong U$ and $S \cong U^{\oplus 2} \oplus E_8^{\oplus 2} \oplus \langle -2\rangle$;
\item $n = 2$, $T \cong U(2)$ and $S \cong U \oplus U(2) \oplus E_8^{\oplus 2} \oplus \langle -2\rangle$.
\end{enumerate}
\end{prop}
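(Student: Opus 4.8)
The plan is to assemble the case analysis carried out just before the statement, specialise it to $2 \leq n \leq 5$, and in each surviving case pin down both $T$ and $S$ up to isometry. First I would record the two structural constraints. Since $i^*$ acts as $\id$ on $A_L$, Proposition \ref{prop: T o S 2-elementary} gives that $S$ is $2$-elementary, and as $\rk(T)=2$ its signature is $(2,19)$; by Lemma \ref{lemma: orthogonal Lperp} together with the unimodularity of $M$, the complement $S^{\perp_M}$ is $2$-elementary of signature $(2,1)$, so by \cite[Thm.~1.5.2]{dolgachev} it is either $U \oplus \langle 2 \rangle$ or $U(2) \oplus \langle 2 \rangle$. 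This is exactly the dichotomy already set up above, in which $T$ is recovered as the orthogonal complement of a primitive copy of $L^{\perp} \cong \langle 2(n-1) \rangle$ inside $S^{\perp_M}$.

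Next I would run through the primitive embeddings $L^{\perp} \hookrightarrow S^{\perp_M}$ listed above, imposing $2 \leq n \leq 5$. The two ``exotic'' embeddings (the subgroup generated by the class of $n-1$ in the $U \oplus \langle 2\rangle$ case, and the subgroup $\langle(0,0,1)\rangle$ in the $U(2)\oplus\langle 2\rangle$ case) both require $n \equiv 2 \pmod 4$, so within the range they occur only for $n=2$; a direct inspection then identifies the resulting $T$ as $U$ and $U(2)$ respectively. The remaining two embeddings (the trivial subgroup in the $U \oplus \langle 2\rangle$ case and the subgroup $\langle v \rangle$ with $q(v)=(n-1)/2$ in the $U(2)\oplus\langle 2\rangle$ case) both yield $A_T \cong \frac{\IZ}{2(n-1)\IZ}\left(-\frac{1}{2(n-1)}\right)\oplus \frac{\IZ}{2\IZ}\left(\frac12\right)$; here I would check that a glue vector $v$ with the prescribed square exists for each $n\in\{2,3,4,5\}$ by reading off the seven nonzero values of $q$ on $(\IZ/2\IZ)^3$, and conclude $T \cong \langle 2\rangle \oplus \langle -2(n-1)\rangle$, which is the unique class in its genus for $n\leq 5$ by \cite[Ch.~15, Thm.~21]{conway_sloane}.

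Finally I would determine $S$ in each case from its genus invariants. Since $S$ and $S^{\perp_M}$ are mutually orthogonal in the unimodular lattice $M$, their discriminant forms are anti-isometric, so the length of $A_S$ equals that of $A_{S^{\perp_M}}$, namely $a_S=1$ when $S^{\perp_M}=U\oplus\langle 2\rangle$ and $a_S=3$ when $S^{\perp_M}=U(2)\oplus\langle 2\rangle$; a short computation shows the invariant $\delta_S$ equals $1$ in both cases, as $q_S$ takes the value $-\frac12$. With signature $(2,19)$ fixed, the classification of even $2$-elementary lattices \cite[Thm.~2.4]{BCS} then forces $S \cong U^{\oplus 2}\oplus E_8^{\oplus 2}\oplus\langle -2\rangle$ for $a_S=1$ and $S \cong U\oplus U(2)\oplus E_8^{\oplus 2}\oplus\langle -2\rangle$ for $a_S=3$. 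Matching these against the four surviving embeddings reproduces cases (1)--(4) of the statement.

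The point I expect to be most error-prone, rather than genuinely hard, is the pairing of $T$ with the correct $S$: the single isometry class $T\cong\langle 2\rangle\oplus\langle -2(n-1)\rangle$ arises from \emph{both} possibilities for $S^{\perp_M}$, so it occurs with two non-isometric anti-invariant lattices (cases (1) and (2)), and $S$ cannot be read off from $T$ alone but only from the ambient $S^{\perp_M}$ (equivalently, from $a_S$). Keeping track of which embedding feeds which length, and confirming that the two lattices $U$ and $U(2)$ that appear only for $n=2$ are genuinely distinct from $\langle 2\rangle\oplus\langle -2\rangle$, is the delicate bookkeeping; everything else reduces to a finite check over $n\in\{2,3,4,5\}$.
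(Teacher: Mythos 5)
Your proposal is correct and follows essentially the same route as the paper: the dichotomy $S^{\perp_M}\cong U\oplus\langle 2\rangle$ or $U(2)\oplus\langle 2\rangle$ via Nikulin/Dolgachev, the enumeration of primitive embeddings $L^{\perp}\hookrightarrow S^{\perp_M}$ through subgroups of the discriminant groups, and the identification of $T$ as the orthogonal complement, with the two extra embeddings surviving only for $n\equiv 2\pmod 4$, i.e.\ $n=2$. The only (harmless) difference is that you spell out the recovery of $S$ from its $2$-elementary invariants $(a_S,\delta_S)$ and the existence of the glue vector $v$ for each $n$, steps the paper leaves implicit.
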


We assume now that the action of the involution on the discriminant is $-\id$. In this case, $T$ is $2$-elementary of signature $(1, 1)$, so $T^{\perp_M}$ is also $2$-elementary and its signature is $(3, 19)$. This implies that $S$ (which is a sublattice of $T^{\perp_M}$) has signature $(2, 19)$. By \cite[Thm.~1.5.2]{dolgachev} there exist three $2$-elementary lattices of signature $(1, 1)$, namely $U$, $U(2)$ and $\langle 2 \rangle \oplus \langle -2 \rangle$. Every such lattice, by \cite[Thm.~1.1.2]{nikulin}, embeds in the Mukai lattice in a unique way, hence the orthogonal complement is uniquely determined too. We analyse the three cases separately: in each of them, there is only one isometry class in the genus of $S$ by \cite[Thm.~2.4]{BCS}.

\begin{itemize}[leftmargin=0.35cm]
\item {\bf Case $ \bm{T = U}$.} We have $T^{\perp_M} \cong U^{\oplus 3} \oplus E_8^{\oplus 2}$, which is unimodular. As a consequence, $L^{\perp} \cong \langle 2(n - 1) \rangle$ embeds in an essentially unique way in $T^{\perp_M}$ and its orthogonal complement $S$ is
\[S = U^{\oplus 2} \oplus E_8^{\oplus 2} \oplus \langle -2(n - 1) \rangle.\]

\item {\bf Case $ \bm{T = U(2)}$.} In this case, $T^{\perp_M} = U(2) \oplus U^{\oplus 2} \oplus E_8^{\oplus 2}$ has discriminant
\[A_{T^{\perp_M}} = \frac{\IZ}{2\IZ} \oplus \frac{\IZ}{2\IZ}, \quad \text{with quadratic form} \; q_{T^{\perp_M}} = \begin{pmatrix}
0 & \frac{1}{2} \\
\frac{1}{2} & 0
\end{pmatrix}.\]
As before, we look at the cyclic subgroups of $A_{T^{\perp_M}}$: a direct computation gives rise to two different cases.
\begin{enumerate}
\item If we choose the trivial subgroup we have $A_S = \frac{\IZ}{2(n - 1)\IZ} \oplus \frac{\IZ}{2\IZ} \oplus \frac{\IZ}{2\IZ}$, with quadratic form
\[q_S = \begin{pmatrix}
-\frac{1}{2(n - 1)} & 0 & 0\\
0 & 0 & \frac{1}{2}\\
0 & \frac{1}{2} & 0
\end{pmatrix}.\]
We conclude
\[S = U \oplus U(2) \oplus E_8^{\oplus 2}  \oplus \langle -2(n - 1) \rangle.\]
\item If $n \equiv 1,3 \pmod{4}$,  we can choose a subgroup  of order two and we have
\[A_S = \frac{\IZ}{2(n - 1) \IZ} \left( -\frac{1}{2(n - 1)} \right),\]
which corresponds to
\[S = U^{\oplus 2} \oplus E_8^{\oplus 2} \oplus \langle -2(n - 1) \rangle.\]

\end{enumerate}

\item {\bf Case $ \bm{T = \langle 2 \rangle \oplus \langle -2 \rangle}$.} Here $T^{\perp_M} = U^{\oplus 2} \oplus E_8^{\oplus 2} \oplus \langle 2 \rangle \oplus \langle -2 \rangle$, whose discriminant group is
\[A_{T^{\perp_M}} = \frac{\IZ}{2\IZ} \left( \frac{1}{2} \right) \oplus \frac{\IZ}{2\IZ} \left( -\frac{1}{2} \right).\]
The same kind of computations yield three cases:
\begin{enumerate}
\item The discriminant group is
\[A_S = \frac{\IZ}{2(n - 1) \IZ} \left( -\frac{1}{2(n - 1)} \right) \oplus \frac{\IZ}{2\IZ} \left( \frac{1}{2} \right) \oplus \frac{\IZ}{2\IZ} \left( -\frac{1}{2} \right),\]
which corresponds to
\[S = U \oplus E_8^{\oplus 2} \oplus \langle 2 \rangle \oplus \langle -2 \rangle \oplus \langle -2(n - 1) \rangle.\]
\item If $n \equiv 0,2 \pmod{4}$ we can have
\[A_S = \frac{\IZ}{2(n - 1) \IZ} \left( -\frac{1}{2(n - 1)} \right),\]
which is realized by
\[S = U^{\oplus 2} \oplus E_8^{\oplus 2} \oplus \langle -2(n - 1) \rangle.\]
\item If $n \equiv 1 \pmod{4}$  we can have
\[A_S = \frac{\IZ}{2(n - 1) \IZ} \left( \frac{n-2}{2(n - 1)} \right).\]
For $n=5$, a representative of the unique isometry class in this genus is
\[S = U \oplus E_8^{2} \oplus \begin{pmatrix}
-2 & 1 & 0\\
1 & -2 & 1 \\
0 & 1 & 2
\end{pmatrix}.\]
\end{enumerate}
\end{itemize}

The next proposition summarizes all possible pairs of lattices $T,S$ corresponding to involutions whose action on the discriminant group $A_L$ is $-\id$, for $n \leq 5$.

\begin{prop}\label{prop:rk two -id}
Let $X$ be  a manifold of $K3^{[n]}$-type  for $2 \leq n \leq 5$, and let $i \in \aut(X)$ be a non-symplectic involution. If the invariant lattice $T \subset H^2(X, \IZ)$ has rank two and $i^*$ acts as $-\id$ on $A_{H^2(X, \IZ)}$, then one of the following holds:
\begin{enumerate}
\item $T \cong U$ and $S \cong U^{\oplus 2} \oplus E_8^{\oplus 2} \oplus \langle -2(n-1) \rangle$;
\item $T \cong U(2)$ and $S \cong U \oplus U(2) \oplus E_8^{\oplus 2} \oplus \langle -2(n-1) \rangle$;
\item $T \cong \langle 2 \rangle \oplus \langle -2\rangle$ and $S \cong U \oplus E_8^{\oplus 2} \oplus \langle 2 \rangle\oplus \langle -2 \rangle \oplus \langle -2(n-1) \rangle$;
\item $n \in \left\{3, 5 \right\}$, $T \cong U(2)$ and $S \cong U^{\oplus 2} \oplus E_8^{\oplus 2} \oplus \langle -2(n-1) \rangle$;
\item $n \in \left\{2, 4 \right\}$, $T \cong \langle 2 \rangle \oplus \langle -2\rangle$ and $S \cong U^{\oplus 2} \oplus E_8^{\oplus 2} \oplus \langle -2(n-1) \rangle$;
\item $n = 5$, $T \cong \langle 2 \rangle \oplus \langle -2\rangle$ and $S \cong U \oplus E_8^{2} \oplus \begin{pmatrix}
-2 & 1 & 0\\
1 & -2 & 1 \\
0 & 1 & 2
\end{pmatrix}.$
\end{enumerate}
\end{prop}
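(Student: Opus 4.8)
The plan is to reduce the statement to a primitive-embedding computation inside the Mukai lattice and then specialize to $n \leq 5$. The starting point is Proposition \ref{prop: T o S 2-elementary}: since $i^*$ acts as $-\id$ on $A_{H^2(X,\IZ)} = A_L$, the invariant lattice $T$ is $2$-elementary. Because $T$ is hyperbolic of rank two, its signature is $(1,1)$, and Dolgachev's classification of $2$-elementary lattices \cite[Thm.~1.5.2]{dolgachev} leaves exactly three candidates, namely $U$, $U(2)$, and $\langle 2\rangle \oplus \langle -2\rangle$. The proof then splits into these three cases, each of which I would treat uniformly by the same embedding argument.

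In each case I would embed $T$ primitively into the Mukai lattice $M$. By \cite[Thm.~1.1.2]{nikulin} such an embedding is unique up to isometry, so the orthogonal complement $T^{\perp_M}$ is determined (it is $U^{\oplus 3}\oplus E_8^{\oplus 2}$, $U^{\oplus 2}\oplus U(2)\oplus E_8^{\oplus 2}$, and $U^{\oplus 2}\oplus E_8^{\oplus 2}\oplus\langle 2\rangle\oplus\langle -2\rangle$ respectively). By Lemma \ref{lemma: orthogonal Lperp}, $S$ is the orthogonal complement of $L^{\perp}\cong\langle 2(n-1)\rangle$ inside $T^{\perp_M}$, so the task becomes classifying the primitive embeddings $L^{\perp}\hookrightarrow T^{\perp_M}$ via \cite[Prop.~1.15.1]{nikulin} and reading off the discriminant form $q_S$ for each. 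Concretely, these embeddings are indexed by pairs of isomorphic subgroups of $A_{L^{\perp}}$ and $A_{T^{\perp_M}}$ on which the induced finite quadratic forms agree, and each admissible gluing produces one candidate for $A_S$. Once $A_S$ and the signature $(2,19)$ of $S$ are known, uniqueness of $S$ in its genus follows from \cite[Thm.~2.4]{BCS}, yielding the explicit representatives in the list.

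The remaining step is the bookkeeping for $2 \leq n \leq 5$: I would determine which subgroup choices of \cite[Prop.~1.15.1]{nikulin} actually give a compatible gluing for each value of $n$. This is where the residue of $n$ modulo $4$ intervenes, since matching the values of the induced quadratic form across the glued subgroups imposes congruence conditions — the nontrivial subgroup in the $T=U(2)$ case survives only for $n\equiv 1,3\pmod 4$, while the two nontrivial subgroups in the $T=\langle 2\rangle\oplus\langle -2\rangle$ case require $n\equiv 0,2\pmod 4$ and $n\equiv 1\pmod 4$ respectively. Running through $n\in\{2,3,4,5\}$ and discarding the incompatible choices leaves exactly the six enumerated cases. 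I expect the main obstacle to be precisely this verification of form-compatibility and the associated congruences under \cite[Prop.~1.15.1]{nikulin}, rather than any conceptual difficulty: the existence and uniqueness of each $S$ is immediate from \cite[Thm.~2.4]{BCS}, and the hyperbolic factor $\langle -2(n-1)\rangle$ is recovered directly from $q_S$ together with the signature.
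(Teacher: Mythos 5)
Your proposal follows essentially the same route as the paper: use Proposition \ref{prop: T o S 2-elementary} to see that $T$ is $2$-elementary of signature $(1,1)$, invoke \cite[Thm.~1.5.2]{dolgachev} to get the three candidates $U$, $U(2)$, $\langle 2\rangle\oplus\langle -2\rangle$, embed each uniquely into the Mukai lattice via \cite[Thm.~1.1.2]{nikulin}, classify the primitive embeddings of $L^{\perp}\cong\langle 2(n-1)\rangle$ into $T^{\perp_M}$ by the subgroup-gluing data of \cite[Prop.~1.15.1]{nikulin} (whence the congruence conditions on $n$ modulo $4$), and conclude uniqueness of $S$ in each resulting genus by \cite[Thm.~2.4]{BCS}. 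This matches the paper's argument in structure and in all the stated congruences, so the proposal is correct.
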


\begin{rem}
For $n=2$, the isometries $\id$ and $-\id$ of $A_L \cong \IZ/2 \IZ$ coincide, hence Proposition \ref{prop:rank two +id} and Proposition \ref{prop:rk two -id} give the same classification (to check this, recall that $U(2) \oplus \langle -2 \rangle \cong \langle 2 \rangle \oplus \langle -2\rangle \oplus \langle -2\rangle$ by \cite[Thm.~1.5.2]{dolgachev}).
\end{rem}

\subsection{Deformation types for families of large dimension}
The lattice computations of Section \ref{subsec:rank one} and Section \ref{subsec:ranktwo} allow us to determine all moduli spaces $\mathcal{M}_{T, \rho}$, for $T$ an admissible invariant sublattice of rank one or two inside $L$ (recall the definitions from Section \ref{sec: existence}). By construction, the moduli spaces $\mathcal{M}_{T, \rho}$ arise as subspaces of the complex space $\mathcal{M}_{L}$, which parametrizes marked IHS manifolds of $\hskn$-type. The following fact was remarked in \cite[Theorem 9.5]{AST} for $K3$ surfaces, and it can be easily generalized to manifolds of $\hskn$-type.

\begin{lemma}\label{lem:closure}
Let $T', T'' \subset L$ be the invariant lattices of two monodromy operators $\rho', \rho'' \in \mon^2(L)$, respectively, and let $S' = (T')^\perp, S'' = (T'')^\perp$ be their orthogonal complements in $L$. The moduli space $\mathcal{M}_{T',\rho'}$ is in the closure of $\mathcal{M}_{T'',\rho''}$ if and only if $S' \subset S''\subset L$ and $(\rho'')\vert_{S'}=(\rho')\vert_{S'}$.
\end{lemma}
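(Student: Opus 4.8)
The plan is to prove both directions by relating containment of moduli spaces to the period domains $\Omega_{S'}$ and $\Omega_{S''}$, using the surjective period map $\mathcal{P}\colon \mathcal{M}_{T,\rho}\to\Omega_S^0$ recalled above. The key observation is that the period point of any marked pair in $\mathcal{M}_{T,\rho}$ lies in $\Omega_S$, and conversely a very general such period point determines a manifold whose N\'eron--Severi group is exactly $T$; this is what forces the invariant lattice of the involution to be precisely $T$ for the generic member. I would first restate the geometric content: a point of $\mathcal{M}_{T',\rho'}$ lies in the closure of $\mathcal{M}_{T'',\rho''}$ precisely when one can degenerate marked $(\rho'',T'')$-polarized pairs to a marked $(\rho',T')$-polarized pair, which at the level of periods means $\Omega_{S'}\subset\overline{\Omega_{S''}}$ together with compatibility of the involutions on the smaller period domain.

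For the forward direction, assume $\mathcal{M}_{T',\rho'}$ is in the closure of $\mathcal{M}_{T'',\rho''}$. First I would show $S'\subset S''$: the symplectic form of any manifold parametrized by $\mathcal{M}_{T'',\rho''}$ is orthogonal to $T''$, hence lies in $S''\otimes\IC$; taking a limit, the period point of a very general element of $\mathcal{M}_{T',\rho'}$ must lie in $\overline{\Omega_{S''}}\cap\Omega_{S'}$. Since $\Omega_{S'}$ parametrizes lines in $S'\otimes\IC$ and these periods are Zariski-dense in $\Omega_{S'}$, the containment $\Omega_{S'}\subset\overline{\Omega_{S''}}$ is only possible if $S'\otimes\IC\subset S''\otimes\IC$, i.e.\ $S'\subset S''$ as sublattices of $L$ (primitivity of both in $L$ upgrades the rational inclusion to a genuine sublattice inclusion). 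For the condition on the involutions, I would argue that the action $\rho''$ on the limiting cohomology must restrict to the action realized by $\rho'$ on the anti-invariant part: since $S'\subset S''$ and the period point sits in $\Omega_{S'}$, the involution governing the limit pair acts on $S'$ both as $\rho'\vert_{S'}$ (by definition of the limit family) and as $\rho''\vert_{S'}$ (by continuity of the monodromy action along the degeneration), whence $\rho''\vert_{S'}=\rho'\vert_{S'}$.

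For the converse, suppose $S'\subset S''\subset L$ with $\rho''\vert_{S'}=\rho'\vert_{S'}$. I would take a very general period point $\omega\in\Omega_{S'}$; since $S'\subset S''$, this $\omega$ also lies in $\Omega_{S''}$, and in fact lies in $\Omega_{S''}^0$ away from the wall divisors (the very general point avoids the countably many hyperplane sections $\delta^\perp$). By surjectivity of $\mathcal{P}\colon\mathcal{M}_{T'',\rho''}\to\Omega_{S''}^0$, there is a marked $(\rho'',T'')$-polarized pair $(X,\eta)$ with period $\omega$. By specializing the polarization one degenerates within $\mathcal{M}_{T'',\rho''}$ to a pair whose N\'eron--Severi lattice grows to $T'$: concretely, the Hodge structure with period $\omega$ satisfies $\NS\supset T'$, and the compatibility $\rho''\vert_{S'}=\rho'\vert_{S'}$ guarantees that the restricted involution on this larger invariant lattice is exactly $\rho'$, so the pair is of type $T'$ and lies in $\mathcal{M}_{T',\rho'}$. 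Since $\omega$ ranges over a dense subset of $\Omega_{S'}$, this exhibits $\mathcal{M}_{T',\rho'}$ in the closure of $\mathcal{M}_{T'',\rho''}$.

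The main obstacle I anticipate is the bookkeeping of markings and monodromy in the limit: one must ensure that the isometry $\rho''\in\mon^2(L)$ genuinely restricts along a connected path in moduli to the prescribed $\rho'$, rather than to some conjugate by an element of $\mon^2(L,T'')$. Controlling this requires the compatibility condition $\rho''\vert_{S'}=\rho'\vert_{S'}$ to be read as an equality of \emph{honest} isometries of the fixed sublattice $S'\subset L$ (not merely up to the ambient group action), and then invoking the uniqueness of the extension of $\id_{T'}\oplus(-\id_{S'})$ to $L$ (guaranteed by Proposition \ref{prop: extension involution} once one of $T',S'$ is $2$-elementary, which holds by Proposition \ref{prop: T o S 2-elementary}) to conclude that $\rho''$ itself restricts to $\rho'$ on all of the limit cohomology. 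This uniqueness is the linchpin that turns the numerical compatibility of the involutions on $S'$ into the geometric statement about closures.
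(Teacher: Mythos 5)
Your forward direction is essentially sound (and, as the remark following the lemma in the paper observes, for involutions the condition $\rho''\vert_{S'}=\rho'\vert_{S'}$ is automatic once $S'\subset S''$, since both restrictions equal $-\id_{S'}$). Note that the paper itself gives no written proof, deferring to \cite[Theorem 9.5]{AST}, so the comparison here is with the standard argument rather than with an explicit one in the text.

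The genuine gap is in your converse. You claim that a very general $\omega\in\Omega_{S'}$ lies in $\Omega_{S''}^0$ because it ``avoids the countably many hyperplane sections $\delta^\perp$''. This fails whenever some wall divisor $\delta\in\Delta(S'')$ is orthogonal to all of $S'$, i.e.\ lies in the negative definite lattice $T'\cap S''$ of rank $\rk(T')-\rk(T'')$: in that case $\Omega_{S'}\subset\delta^\perp$ entirely, so $\Omega_{S'}\cap\Omega_{S''}^0=\emptyset$ and the surjectivity of $\mathcal{P}\colon\mathcal{M}_{T'',\rho''}\to\Omega_{S''}^0$ produces nothing over $\Omega_{S'}$. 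This is not a pathological scenario but one the paper actually needs: for instance, with $T''=\langle 2\rangle$ and $T'=\langle 2\rangle\oplus\langle -2\rangle$ the complement $T'\cap S''\cong\langle -2\rangle$ is generated by a class of square $-2$ and divisibility $1$, which is a wall divisor. The repair is to run the degeneration the other way: start from an \emph{arbitrary} $(X_0,\eta_0)\in\mathcal{M}_{T',\rho'}$ and deform it inside $\mathcal{M}_L$ along the locus where $\eta^{-1}(T'')$ remains algebraic (an open neighbourhood of the period of $X_0$ inside $\Omega_{S''}$); the very general nearby fibre has N\'eron--Severi lattice exactly $\eta_t^{-1}(T'')$ and period in $\Omega_{S''}^0$, hence carries an involution inducing $\rho''$ by the Torelli argument of Theorem \ref{thm: existence}, so $(X_0,\eta_0)\in\overline{\mathcal{M}_{T'',\rho''}}$. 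Note also that even in the cases where your lift does exist, it only places the points of $\mathcal{M}_{T',\rho'}$ lying over $\Omega_{S'}\cap\Omega_{S''}^0$ inside $\overline{\mathcal{M}_{T'',\rho''}}$, and you would still owe an argument that these are dense in $\mathcal{M}_{T',\rho'}$; the outward deformation handles every point at once.
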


\begin{rem}
In our setting we can slightly improve the result of Lemma \ref{lem:closure}. In fact, as observed in Section \ref{sec: existence}, the orthogonal sublattices $T, S \subset L$ determine the involution $\rho \in \mon^2(L)$ as the unique extension of $\id_{T} \oplus (-\id_{S})$ to $L$. So, if we assume that $S'\subset S''$, then
\[(\rho'')\vert_{S'} = (-\id_{S''})\vert_{S'} = -\id_{S'} = (\rho')\vert_{S'}.\]
In the case of involutions we can then say that $\mathcal{M}_{T',\rho'}$ is in the closure of $\mathcal{M}_{T'',\rho''}$ if and only if $S' \subset S''\subset L$, as embedded sublattices.
\end{rem}

In this sense, the moduli spaces $\mathcal{M}_{T,\rho}$ of maximal dimension (where maximality is with respect to this notion) correspond to minimal (with respect to inclusion) admissible sublattices $T \subset L$. This is the reason why, in the previous section, we investigated in detail admissible invariant lattices of low rank. Any of these admissible lattices $T$ will give rise to at least one (but there could be more a priori, depending on the number of connected components of the moduli space) projective family of dimension $21 - \rk(T)$, whose generic member has a non-symplectic involution with invariant lattice $T$. We are now interested in computing the number of irreducible components for some of these moduli spaces.

We adopt the notation of \cite[Chapter 4]{joumaah}. Let $T \subset L$ be an admissible sublattice, i.e.\ the (hyperbolic) invariant lattice of an involution $\rho \in \mon^2(L)$, and let $\mathcal{C}_T$ be one of the two connected components of the cone $\left\{x \in T \otimes \IR \mid (x,x) > 0 \right\}$. The \emph{K\"ahler-type chambers of $T$} are the connected components of
\[ \mathcal{C}_T \setminus \bigcup_{\delta \in \Delta(T)} \delta^\perp \]
\noindent where $\Delta(T)$ is the set of wall divisors in $T$. As before, let $\Gamma_T$ be the image of the restriction map $\mon^2(L,T) \ra O(T)$: the subgroup $\Gamma_T \subset O(T)$ has finite index and it conjugates invariant wall-divisors, therefore it also acts on the set $\textrm{KT}(T)$ of K\"ahler-type chambers of $T$ (see \cite[\S 4.7]{joumaah}). In \cite[Theorem 4.8.11]{joumaah}, Joumaah proved that the quotient $\textrm{KT}(T)/\Gamma_T$ is in one-to-one correspondence with the set of distinct deformation types of marked manifolds $(X, \eta) \in \mathcal{M}_{T, \rho}$.

\begin{prop}\label{prop:U(2)irreducible}
Let $T \cong U(2)$ be a primitive sublattice of $L = U^{\oplus 3} \oplus E_8^{\oplus 2} \oplus$ $\langle -2(n-1)\rangle$ with orthogonal complement $S \cong U \oplus U(2) \oplus E_8^{\oplus 2} \oplus \langle -2(n-1) \rangle$. Let $\rho_1 \in \mon^2(L)$ be the involution which extends $\id_T \oplus (-\id_S)$. Then, for any $n \geq 2$ there is a single deformation type of marked manifolds of $\hskn$-type $(X, \eta) \in \mathcal{M}_{T, \rho_1}$.
\end{prop}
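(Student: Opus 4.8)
The plan is to apply Joumaah's description of deformation types: by \cite[Theorem 4.8.11]{joumaah} the deformation types of marked pairs $(X,\eta) \in \mathcal{M}_{T,\rho_1}$ are in bijection with the orbit set $\mathrm{KT}(T)/\Gamma_T$, where $\mathrm{KT}(T)$ is the set of K\"ahler-type chambers of $T$ and $\Gamma_T \subset O(T)$ is the image of $\mon^2(L,T)$. So it suffices to prove that this quotient is a singleton, and the cleanest route is to show that $\Delta(T) = \emptyset$, i.e.\ that $T \cong U(2)$ contains no wall divisors: then $\mathcal{C}_T$ is itself the unique K\"ahler-type chamber and the quotient is trivially a single element, irrespective of $\Gamma_T$.

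To show $\Delta(T) = \emptyset$ I would establish two lattice-theoretic facts about the primitive vectors $\delta$ of $T \cong U(2)$ and combine them with the numerical classification of wall (monodromy birationally minimal) divisors for manifolds of $\hskn$-type. First, writing $\delta = ae + bf$ in the standard basis of $U(2)$ (so $(e,e)=(f,f)=0$, $(e,f)=2$), primitivity gives $\gcd(a,b)=1$ and $(\delta,\delta) = 4ab$; in particular $4 \mid (\delta,\delta)$, so no vector of $T$ has square $-2$. Second, I claim $\mathrm{div}_L(\delta) = 1$ for every primitive $\delta \in T$. This follows from a discriminant count: since $A_{U(2)} \cong (\IZ/2\IZ)^2$ and $A_S \cong (\IZ/2\IZ)^2 \oplus \IZ/2(n-1)\IZ$, the relation $|A_T|\,|A_S| = [L:T\oplus S]^2\,|A_L|$ reads $4\cdot 8(n-1) = [L:T\oplus S]^2\cdot 2(n-1)$, forcing $[L:T\oplus S]=4$, hence $\frac{L}{T\oplus S} \cong (\IZ/2\IZ)^2$. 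Using $H_T \cong H_L$ from the setup of Section \ref{sec: discr groups}, we get $|H_T| = 4 = |A_T|$, so the projection $H_T = p_T(H_L) \subseteq A_T$ is onto. Thus every element of $A_T = T^\vee/T$ is realized by an element of $L$, and for primitive $\delta$ the perfectness of the pairing $T \times T^\vee \ra \IZ$ yields some $x \in L$ with $(\delta,x)=1$, i.e.\ $\mathrm{div}_L(\delta)=1$.

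With these two facts in hand I would invoke the classification of wall divisors: a primitive class of divisibility $1$ that is a wall divisor of a manifold of $\hskn$-type necessarily has square $-2$. Since no primitive vector of $T$ has square $-2$ (they all have square divisible by $4$) while all have divisibility $1$ in $L$, none can be a wall divisor; therefore $\Delta(T) = \emptyset$. Consequently $\mathcal{C}_T \setminus \bigcup_{\delta\in\Delta(T)}\delta^\perp = \mathcal{C}_T$ is connected, there is exactly one K\"ahler-type chamber, and by \cite[Theorem 4.8.11]{joumaah} there is a single deformation type of marked manifolds in $\mathcal{M}_{T,\rho_1}$, uniformly in $n$.

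The main obstacle is this last step: pinning down, via the numerical classification, precisely which primitive vectors of $U(2)$ are wall divisors, so as to rule out every candidate from the interplay between the square (always a multiple of $4$) and the divisibility (always $1$), in particular excluding potential wall divisors of square $-4,-8,\dots$ and divisibility $1$. Should it turn out that a single wall survives --- the $(-4)$-class $\delta = e-f$, whose orthogonal ray bisects $\mathcal{C}_T$ --- the argument is instead closed by a transitivity statement: $O(U(2))$ is generated by $-\id$ and the exchange isometry $\nu$ swapping $e$ and $f$ (a direct computation shows $O(U(2)) \cong (\IZ/2\IZ)^2$), and $\nu$, which coincides with the reflection in $e-f$, preserves $\mathcal{C}_T$ and interchanges the two resulting chambers; checking $\nu \in \Gamma_T$ then makes $\mathrm{KT}(T)/\Gamma_T$ a singleton again.
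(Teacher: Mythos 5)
Your proposal is correct and follows essentially the same route as the paper: both reduce to Joumaah's bijection with $\mathrm{KT}(T)/\Gamma_T$ and then show $\Delta(T)=\emptyset$ by observing that every primitive $\delta \in U(2)$ has square in $4\IZ$ (hence never $-2$) and divisibility $1$ in $L$, while wall divisors of divisibility $1$ must have square $-2$ by the Bayer--Macr\`i/Mongardi numerical criterion. The only (minor) difference is that you derive $\divi_L(\delta)=1$ via the discriminant-group count $|A_T|\,|A_S|=[L:T\oplus S]^2|A_L|$ and surjectivity of $p_T$, whereas the paper reads it off directly from the coordinates of the embedding; your closing contingency about a surviving $(-4)$-wall is unnecessary, since the divisibility-$1$ criterion already excludes it.
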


\begin{proof}
As we recalled above, the number of deformation types of $(\rho_1, T)$-polarized marked manifolds of $\hskn$-type is equal to the number of orbits of K\"ahler-type chambers of $T$, with respect to the action of the subgroup $\Gamma_T \subset O(T)$. For $T \cong U(2)$ as in the statement, an element $\delta \in T$ of coordinates $(a,b)$ with respect to a basis has square $4ab$ and divisibility in $L$ equal to $\gcd(a,b)$. In particular, the divisibility can only be one, if $\delta$ is primitive.  However, a direct computation using \cite[Thm.~12.1]{bayer_macri_mmp} shows that, if $\delta$ is a wall-divisor with $\divi(\delta) = 1$, then $\delta^2 = -2$ (see \cite[Rmk.~2.5]{mongardi_cones}). We conclude that there are no wall-divisors $\delta \in T$, since $T \cong U(2)$ contains no elements of square $-2$.
\end{proof}

As we showed in Subsection \ref{subsec:ranktwo}, when $n$ is odd there is a second way to embed the lattice $U(2)$ in $L$, which is not isometric to the one studied in Proposition \ref{prop:U(2)irreducible}. 

\begin{prop}\label{prop:U(2) 3 chambers}
For $n$ odd, let $T \cong U(2)$ be a primitive sublattice of $L = U^{\oplus 3} \oplus E_8^{\oplus 2} \oplus \langle -2(n-1)\rangle$ with orthogonal complement $S \cong U^{\oplus 2} \oplus E_8^{\oplus 2} \oplus \langle -2(n-1) \rangle$. Let $\rho_2 \in \mon^2(L)$ be the involution which extends $\id_T \oplus (-\id_S)$. Then, if $n=5$ there are three distinct deformation types of marked manifolds $(X, \eta) \in \mathcal{M}_{T, \rho_2}$.
\end{prop}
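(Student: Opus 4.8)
The plan is to apply Joumaah's correspondence recalled before the statement: the number of deformation types of marked pairs in $\mathcal{M}_{T,\rho_2}$ equals the number of orbits of the set $\textrm{KT}(T)$ of K\"ahler-type chambers under the restricted monodromy group $\Gamma_T \subset O(T)$. Since $T \cong U(2)$ is a hyperbolic plane, $\mathcal{C}_T$ is a two-dimensional cone bounded by the two isotropic rays, and each wall divisor $\delta \in \Delta(T)$ contributes the single ray $\delta^{\perp} \cap \mathcal{C}_T$; the chambers are the angular sectors these rays cut out. The computation thus reduces to three tasks: determine the divisibility function on $T$ induced by the embedding, enumerate $\Delta(T)$, and compute $\Gamma_T$ together with its action on the sectors.

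For the first task I would pin down the glue. Writing $T = \langle e,f\rangle$ with $e^2=f^2=0$, $(e,f)=2$, the overlattice $L \supset T\oplus S$ corresponds to an isotropic glue subgroup whose projection $H_T \subset A_T \cong (\IZ/2\IZ)^2$ must be one of the two isotropic lines, say $H_T = \langle \overline{e/2}\rangle$ (the two choices give isometric embeddings, exchanged by $e \leftrightarrow f$). Computing $p_T(L) = \langle e/2,\, f\rangle$ then gives, for a primitive $\delta = ae+bf$, that $\divi_L(\delta) = \gcd(2a,b)$, which equals $2$ exactly when $b$ is even and $1$ otherwise. In particular the divisibility takes only the values $1$ and $2$, and the embedding is not symmetric under the swap $e \leftrightarrow f$; this asymmetry is precisely what distinguishes $\rho_2$ from the embedding of Proposition \ref{prop:U(2)irreducible}.

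For the second task I would translate the Bayer--Macr\`i criterion to the Mukai lattice exactly as in Proposition \ref{prop:U(2)irreducible}. A divisibility-one class has $\delta^2 \in 4\IZ$, so it can never equal $-2$ and hence (by \cite[Rmk.~2.5]{mongardi_cones}) is never a wall divisor; all walls come from divisibility-two classes. For such a $\delta$, the glue with the generator $v$ of $L^{\perp} \cong \langle 2(n-1)\rangle$ (so $v^2 = 8$ when $n=5$) produces the class $a = \tfrac12(\delta + v) \in M$, with $a^2 = \tfrac14(\delta^2 + 8)$ and $(a,v) = v^2/2$. Thus $a$ is isotropic precisely when $\delta^2 = -8$ and spherical precisely when $\delta^2 = -16$, and a rank-two computation shows these are the only squares for which $\langle v,\delta\rangle$ carries a $(-2)$- or $0$-class meeting the Bayer--Macr\`i bound \cite[Thm.~12.1]{bayer_macri_mmp}. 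Enumerating the divisibility-two classes of these squares in $U(2)$ (that is, $ab=-2$ and $ab=-4$ with $b$ even) yields, up to sign, the two wall divisors $e-2f$ and $e-4f$; their orthogonals span the rays $\IR(e+2f)$ and $\IR(e+4f)$, which cut $\mathcal{C}_T$ into exactly three chambers.

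Finally, for the third task I would show that $\Gamma_T$ acts trivially on these three chambers. Any $g \in \mon^2(L,T)$ preserves the glue, so its restriction $g|_T$ fixes the subgroup $H_T \subset A_T$; since the swap $e\leftrightarrow f$ sends $H_T$ to the other isotropic line, it does not lie in $\Gamma_T$, whence $\Gamma_T \subseteq \{\pm\id_T\}$. As $-\id_T$ exchanges the two components of the positive cone, the stabilizer of $\mathcal{C}_T$ in $\Gamma_T$ is trivial, so the three chambers are pairwise inequivalent and give three distinct deformation types. I expect the main obstacle to be the second task: verifying that the isotropic case $\delta^2=-8$ genuinely produces a wall divisor rather than a fake wall, and confirming that the list of admissible squares is complete. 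These are exactly the delicate points in the Bayer--Macr\`i translation, and they are what force the count to be three rather than two (the same analysis for $n=3$, where neither square $-4$ nor $-12$ is realized by a divisibility-two class, would instead yield a single chamber).
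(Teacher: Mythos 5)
Your proof is correct and follows essentially the same route as the paper: Joumaah's chamber correspondence, the divisibility formula $\gcd(2a,b)$ on $T$ (which the paper obtains from an explicit embedding $U(2)\hookrightarrow L$ rather than from the glue subgroup), the identification of the two walls with $(\delta^2,\divi(\delta))=(-8,2)$ and $(-16,2)$ via the Bayer--Macr\`i/Mongardi criterion (the paper tabulates all admissible pairs in $L_5$ and then intersects with $T$, but the outcome is identical, and $(-8,2)$ is indeed a genuine wall), and the triviality of the action on the three chambers (which the paper deduces from the two walls having generators of distinct squares, rather than from your sharper observation that $\Gamma_T\subseteq\{\pm\id\}$ because the swap does not preserve $H_T$). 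One caveat on your closing parenthetical about $n=3$: there the glue forces $H_T$ to be the \emph{anisotropic} line of $A_{U(2)}$ (since $q_{H_S}\equiv -1$ when $2(n-1)=4$), so the divisibility becomes $\gcd(a+b,2a,2b)$ rather than $\gcd(2a,b)$, and $e-f$ is then a divisibility-two class of square $-4$ which \emph{is} a wall divisor -- so the $n=3$ analysis does not yield a single chamber; this error is confined to the aside and does not affect your proof of the $n=5$ statement.
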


\begin{proof}
As in the proof of Proposition \ref{prop:U(2)irreducible}, we need to study the K\"ahler-type chambers of $T$ and therefore determine whether the lattice contains any wall-divisors. Up to isometries, the embedding $U(2) \hookrightarrow L$ in the statement can be realized as follows. Let $t = \frac{n-1}{2} \in \IN$ and consider the map
\[ j: U(2) \hookrightarrow L = U^{\oplus 3} \oplus E_8^{\oplus 2} \oplus \langle -2(n-1) \rangle, \quad (a,b) \mapsto 2ae_1 + (at+b)e_2 + ag\]
\noindent where $\left\{ e_1, e_2 \right\}$ is a basis for one of the summands $U$ of $L$ and $g$ is a generator of $\langle -2(n-1) \rangle$. We then have $j(U(2))^\perp \cong U^{\oplus 2} \oplus E_8^{\oplus 2} \oplus \langle -2(n-1) \rangle$, as requested. In particular, if $n=5$ (i.e.\ $t=2$) one can show that the divisibility in $L$ of $(a,b) \in T = j(U(2))$ is $\gcd(2a,b)$, hence, if the element is primitive, it can only be one or two. We compute explicitly all possible pairs $(\delta^2, \divi(\delta))$ for wall-divisors $\delta \in L_5 = U^{\oplus 3} \oplus E_8^{\oplus 2} \oplus \langle -8 \rangle$. This is an application of of \cite[Thm.~12.1]{bayer_macri_mmp} and \cite[Thm.~1.3]{mongardi_cones}, which gives the following results:
\begin{center}
\begin{tabular}{|c|c|}
\hline
$\delta^2$ & $\divi(\delta)$\\\hline \hline
$-2$ & $1$ \\\hline 
$-8$ & $2$ \\\hline
$-8$ & $4$ \\\hline
$-8$ & $8$ \\\hline
$-16$ & $2$ \\\hline
$-40$ & $4$ \\\hline
$-72$ & $8$ \\\hline
$-136$ & $8$ \\\hline
$-200$ & $8$ \\\hline
\end{tabular}
\end{center}

Since for any $\delta \in T$ we have $\delta^2 \in 4\IZ$, the only pairs $(\delta^2, \divi(\delta))$ for wall-divisors $\delta \in T$  are $(\delta^2, \divi(\delta)) = (-8,2), (-16,2)$. Each of the two admissible pairs $(\delta^2, \divi(\delta))$ yields a single wall-divisor $\delta \in T$, whose orthogonal complement $\delta^\perp$ intersects the positive cone of $T$ in its interior. We therefore have two (distinct) walls, which cut out three K\"ahler-type chambers in $\mathcal{C}_T$. These three chambers correspond to three distinct orbits, with respect to the action of the group $\Gamma_T$ on $\text{KT}(T)$. This is due to the fact that an isometry $\gamma \in \Gamma_T$ permutes the walls of the chambers, which in our case are generated by primitive vectors having all different squares.
\end{proof}

\begin{rem} \label{rem: moduli spaces recap}
By Proposition \ref{prop: rank one}, there are two distinct $(\rho, T)$-polarizations with $T \cong \langle 2 \rangle$. In the following, we will denote them by $(\rho_a, \langle 2 \rangle)$ and $(\rho_b, \langle 2 \rangle)$, where the orthogonal complement $S$ of the admissible sublattice $T \subset L$ is as in case ($2a$) and ($2b$), respectively, of the proposition. In particular, for all $n \geq 2$ the moduli space $\mathcal{M}_{\langle 2\rangle, \rho_a}$ is non-empty, while $\mathcal{M}_{\langle 2\rangle, \rho_b} = \emptyset$ if $n \not \equiv 0 \pmod{4}$. In turn, again by Proposition \ref{prop: rank one}, for $n \geq 3$ there is only one $(\rho, T)$-polarization with $T \cong \langle 2(n-1) \rangle$: we denote by $\mathcal{M}_{\langle 2(n-1)\rangle, \rho}$ the corresponding moduli space, which is non-empty if and only if $-1$ is a quadratic residue modulo $n-1$. Finally, for $T \cong U(2)$, we have the two polarizations $(\rho_1, U(2))$, $(\rho_2, U(2))$ which we studied in Proposition \ref{prop:U(2)irreducible} and Proposition \ref{prop:U(2) 3 chambers}, respectively.
\end{rem}

\begin{theorem} \label{thm: max dim families}
Let $(X, \eta)$ be a marked manifold of $K3^{[n]}$-type for $2 \leq n \leq 5$, and let $i \in \aut(X)$ be a non-symplectic involution such that the pair $(X, i)$ deforms in a family of dimension $d \geq 19$. Then $(X, \eta)$ belongs to the closure of one of the following moduli spaces.
 
 \begin{enumerate}
  \item[$n=2$:] $\mathcal{M}_{\langle 2\rangle, \rho_a}$ or $\mathcal{M}_{U(2), \rho_1}$.
  \item[$n=3$:] $\mathcal{M}_{\langle 2\rangle, \rho_a}$, $\mathcal{M}_{\langle 4\rangle, \rho}$ or $\mathcal{M}_{U(2), \rho_1}$.
  \item[$n=4$:] $\mathcal{M}_{\langle 2\rangle,\rho_a}$, $\mathcal{M}_{\langle 2\rangle,\rho_b}$  or $\mathcal{M}_{U(2), \rho_1}$.
  \item[$n=5$:] $\mathcal{M}_{\langle 2\rangle, \rho_a}$, $\mathcal{M}_{U(2), \rho_1}$ or $\mathcal{M}_{U(2), \rho_2}$.
 \end{enumerate} 
 
All these moduli spaces are irreducible with the exception of $\mathcal{M}_{U(2), \rho_2}$ for $n=5$, which has three distinct irreducible components.
\end{theorem}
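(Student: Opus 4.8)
The plan is to convert the statement into a finite comparison of lattices: the dimension count bounds $\rk(T)$, the closure criterion of Lemma~\ref{lem:closure} singles out the maximal families among those already classified, and the K\"ahler-type chamber analysis yields the number of irreducible components.

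First I would use that the local deformations of a pair $(X,i)$ of type $T$ are parametrized by $H^{1,1}(X)^{i^*}$, so that the family through $(X,\eta)$ has dimension $\rk(S)-2 = 21-\rk(T)$, where $T$ is the (hyperbolic, hence of rank $\geq 1$) invariant lattice of its generic member and $S=T^\perp$. Thus $d\geq 19$ forces $\rk(T)\in\{1,2\}$, and $(X,\eta)$ lies in $\overline{\mathcal{M}_{T,\rho}}$ for an admissible $T$ of rank one or two. For each $2\leq n\leq 5$ the possible pairs $(T,S)$ are exactly those enumerated in Propositions~\ref{prop: rank one}, \ref{prop:rank two +id} and \ref{prop:rk two -id}, so the problem becomes purely combinatorial.

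Next I would invoke Lemma~\ref{lem:closure} together with the remark following it: for involutions, $\mathcal{M}_{T',\rho'}\subseteq\overline{\mathcal{M}_{T'',\rho''}}$ if and only if $S'\subseteq S''$, equivalently $T''\subseteq T'$, as sublattices embedded in $L$. Hence the maximal families are those with minimal invariant lattice, and it suffices to check that every enumerated $T$ contains an admissible invariant sublattice belonging to the list of Remark~\ref{rem: moduli spaces recap}. The rank-one lattices are minimal by rank and already exhaust the listed families $\mathcal{M}_{\langle 2\rangle,\rho_a}$, $\mathcal{M}_{\langle 2\rangle,\rho_b}$ (present exactly when $n\equiv 0\pmod 4$) and $\mathcal{M}_{\langle 2(n-1)\rangle,\rho}$ (present exactly when $-1$ is a quadratic residue modulo $n-1$). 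For a rank-two $T$ I would search for a primitive $v\in T$ such that $\langle v\rangle\subset L$ is one of these admissible rank-one invariant lattices; its existence hinges on the divisibility of $v$ in $L$. The lattices $\langle 2\rangle\oplus\langle -2(n-1)\rangle$, $U$ and $\langle 2\rangle\oplus\langle -2\rangle$ all contain a square-two primitive class, hence an admissible $\langle 2\rangle$, and so descend into $\overline{\mathcal{M}_{\langle 2\rangle,\rho_a}}$ (or $\overline{\mathcal{M}_{\langle 2\rangle,\rho_b}}$). For $T\cong U(2)$, all of whose classes have square in $4\IZ$, the two embeddings into $L$ behave differently: in the embedding of Proposition~\ref{prop:U(2)irreducible} every primitive class has divisibility one and square $4ab\neq 2$, so no admissible rank-one sublattice exists and $\mathcal{M}_{U(2),\rho_1}$ is maximal for all $n$; in the embedding of Proposition~\ref{prop:U(2) 3 chambers} (available for $n$ odd) the divisibility formula changes, permitting a square-$2(n-1)$ class of divisibility $n-1$, which for $n=3$ realizes an admissible $\langle 4\rangle$ and embeds $\rho_2$ into $\overline{\mathcal{M}_{\langle 4\rangle,\rho}}$, while for $n=5$ no admissible class occurs ($-1$ is not a quadratic residue modulo $4$ and $U(2)$ has no square-two vector), so $\mathcal{M}_{U(2),\rho_2}$ is maximal. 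This leaves precisely the four listed families.

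Finally, for the count of irreducible components I would use Joumaah's correspondence between the components of $\mathcal{M}_{T,\rho}$ and the orbits of K\"ahler-type chambers under $\Gamma_T$. A rank-one $T$ has a half-line as positive cone and no wall-divisors, hence a single chamber and an irreducible moduli space; $\mathcal{M}_{U(2),\rho_1}$ is irreducible by Proposition~\ref{prop:U(2)irreducible}; and for $n=5$ the space $\mathcal{M}_{U(2),\rho_2}$ has exactly three chambers by Proposition~\ref{prop:U(2) 3 chambers}, yielding its three irreducible components. I expect the main obstacle to be the rank-two reduction of the third paragraph: deciding, for each candidate rank-one class inside a given rank-two $T$, which admissible case (2a, 2b, or case 1) it realizes requires tracking its divisibility in $L$ and applying Nikulin's embedding criteria, exactly as in the divisibility computations of Propositions~\ref{prop:U(2)irreducible} and \ref{prop:U(2) 3 chambers}. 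This is a finite but delicate case-by-case verification, and it is precisely where the shape of each of the four lists is pinned down.
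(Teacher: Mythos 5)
Your argument is correct and follows essentially the same route as the paper: bound $\rk(T)\leq 2$ by the dimension count, reduce to the enumerations of Propositions \ref{prop: rank one}, \ref{prop:rank two +id} and \ref{prop:rk two -id}, apply Lemma \ref{lem:closure} to identify the maximal families, and count irreducible components via the K\"ahler-type chamber computations of Propositions \ref{prop:U(2)irreducible} and \ref{prop:U(2) 3 chambers}. The paper's own proof is a terse version of this (it declares the list ``an easy consequence'' of the closure lemma and the classification propositions), and your explicit checks --- that every rank-two invariant lattice except the two $U(2)$ embeddings contains an admissible square-two class, and that $U(2)$ with $\rho_2$ contains an admissible $\langle 4\rangle$ for $n=3$ but nothing admissible for $n=5$ --- are precisely the combinatorics the paper leaves implicit.
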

\begin{proof}
Since $(X,i)$ deforms in a family of dimension at least $19$, it is a pair of type $T$ for some admissible lattice $T$ with $\rk(T) \leq 2$. At the level of period domains, the list in the statement is an easy consequence of Lemma \ref{lem:closure} and of Propositions \ref{prop: rank one}, \ref{prop:rank two +id} and \ref{prop:rk two -id}. Moreover, the period map is generically injective when restricted to manifolds polarized with a lattice of rank one, and the same is true in the case of $U(2)$ by Proposition \ref{prop:U(2)irreducible} and by \cite[Corollary 4.9.6]{joumaah}, with the exception of $n=5$ and $\mathcal{M}_{U(2), \rho_2}$ as explained in Proposition \ref{prop:U(2) 3 chambers}.
\end{proof}

\section{Examples}\label{sect: examples}
Even when we limit ourselves to $n \leq 5$, we observe that we lack the description of most of the projective families listed in Theorem \ref{thm: max dim families}. Indeed, while for $n=2$ both families have been described, respectively in \cite{OG}-\cite{BCMS} and \cite{IKKR_U2}, for $n\geq 3$ the family of $(\langle 2\rangle, \rho_a)$-polarized manifolds of $K3^{[n]}$-type is still unknown. In fact, when $n\geq 3$ the only two explicit examples which have been found are for $n=3$, $T\cong \langle 4\rangle$ (see \cite{IKKR} and Section \ref{sec: discr groups}) and $n=4$, $T\cong \langle 2\rangle$ with polarization $\rho_b$ (involution of the Lehn--Lehn--Sorger--van Straten eightfold; see for instance \cite{llms_twisted_cubics}), in addition to the involutions of Hilbert schemes of points on generic projective $K3$ surfaces whose existence has been proved by the second author in \cite{catt_autom_hilb}.

We conclude by observing that all families of dimension $19$ can in fact be realized as families of moduli spaces of stable twisted sheaves on a $K3$ surface. We briefly recall the construction and the properties of these moduli spaces.

Let $\Sigma$ be a $K3$ surface. By \cite[\S 2]{vg_brauer}, a Brauer class $\alpha \in H^2(\Sigma, \mathcal{O}^*_\Sigma)_{\text{tor}}$ of order $2$ corresponds to a surjective homomorphism $\alpha: \transc(\Sigma) \ra \IZ/2 \IZ$, where $\transc(\Sigma) = \ns(\Sigma)^\perp \subset H^2(\Sigma, \IZ)$ is the transcendental lattice of the surface. A $B$-field lift of $\alpha$ is a class $B \in H^2(\Sigma, \IQ)$ (which can be determined via the exponential sequence) such that $2B \in H^2(\Sigma, \IZ)$ and $\alpha(v) = (2B, v)$ for all $v \in \transc(\Sigma)$ (see \cite[\S 3]{stellari_huybrechts}). Notice that $B$ is defined only up to an element in $H^2(\Sigma, \IZ) + \frac{1}{2}\ns(\Sigma)$.

The full cohomology $H^*(\Sigma, \IZ) = H^0(\Sigma, \IZ) \oplus H^2(\Sigma, \IZ) \oplus H^4(\Sigma, \IZ)$, endowed with the pairing $(r,H,s)\cdot (r',H',s') = H\cdot H' - rs' -r's$, is a lattice isometric to the Mukai lattice $M = U^{\oplus 4} \oplus E_8^{\oplus 2}$. A Mukai vector $v = (r,H,s)$ is said to be \emph{positive} if $H \in \Pic(\Sigma)$ and either $r > 0$, or $r = 0$ and $H \neq 0$ effective, or $r = H = 0$ and $s > 0$. If $v=(r,H,s) \in H^*(\Sigma, \IZ)$ is positive, and $B$ is a $B$-field lift of $\alpha$, we define the twisted Mukai vector \mbox{$v_B \coloneqq (r, H + rB, s + B \cdot H + r\frac{B^2}{2})$}. If $v_B$ is primitive, for a suitable choice of a polarization $D$ of $\Sigma$ the coarse moduli space $M_{v_B}(\Sigma, \alpha)$ of $\alpha$-twisted Gieseker $D$-stable sheaves with Mukai vector $v_B$ is a projective IHS manifold of $\hskn$-type, with $n = \frac{v_B^2}{2} + 1$. Moreover, the image of the canonical embedding $H^2(M_{v_B}(\Sigma, \alpha), \IZ) \hookrightarrow M$, which we recalled at the beginning of Section \ref{sec: inv and anti-inv lattices}, is the subspace $v_{B}^\perp \subset M$  (see \cite{yoshioka_twisted} and \cite{baymacr}). For the sake of readability, we do not specify the ample divisor $D$ in the notation for $M_{v_B}(\Sigma, \alpha)$, even though the construction depends on it: we will always assume that a choice of a polarization (generic with respect to the Mukai vector $v_B$, in the sense of \cite[Def.~3.5]{yoshioka_twisted}) has been made. The transcendental lattice of $M_{v_B}(\Sigma, \alpha)$ is isomorphic to $\ker(\alpha) \subset \transc(\Sigma)$, which is a sublattice of index $2$ if $\alpha$ is not trivial. In turn, $\Pic(M_{v_B}(\Sigma, \alpha)) \cong v_B^\perp \cap \Pic(\Sigma, \alpha)$ inside $H^*(\Sigma, \IZ)$, where $\Pic(\Sigma, \alpha)$ is the sublattice generated by $\Pic(\Sigma)$ and by the vectors $(0,0,1), (2, 2B, 0)$ (see \cite[\S 3]{yoshioka_twisted} and \cite[Lemma 3.1]{macri_stellari}). 

\begin{prop}\label{prop: twisted induced U(2) irred}
For $n \geq 2$, let $(X, \eta)$ be a very general element in the moduli space $\mathcal{M}_{U(2), \rho_1}$ of Proposition \ref{prop:U(2)irreducible}, such that $\eta(\Pic(X)) \cong U(2)$. Then, the manifold $X$ is isomorphic to a moduli space of twisted sheaves on a very general projective $\langle 2(n-1) \rangle$-polarized $K3$ surface. 
\end{prop}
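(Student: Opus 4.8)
The plan is to produce $X$ as an explicit twisted moduli space, choosing the K3 surface $\Sigma$, the order-two Brauer class $\alpha$ and the twisted Mukai vector $v_B$ so that $\transc(M_{v_B}(\Sigma,\alpha))\cong S$ and $\Pic(M_{v_B}(\Sigma,\alpha))\cong T$, and then matching periods. Since $X$ is very general in $\mathcal{M}_{U(2),\rho_1}$, we have $\eta(\Pic(X))=T\cong U(2)$ and $\transc(X)\cong S\cong U\oplus U(2)\oplus E_8^{\oplus 2}\oplus\langle -2(n-1)\rangle$. A very general projective $\langle 2(n-1)\rangle$-polarized K3 surface $\Sigma$ has $\ns(\Sigma)=\langle 2(n-1)\rangle$ and $\transc(\Sigma)\cong U^{\oplus 2}\oplus E_8^{\oplus 2}\oplus\langle -2(n-1)\rangle$, which differs from $S$ only by replacing one copy of $U$ with $U(2)$; note also that both moduli problems have dimension $19$, which is what makes the identification plausible.

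The key is to pick $\alpha$ and $v_B$ so that the two lattices come out correctly at the same time. Fix a hyperbolic summand of $\transc(\Sigma)$ with basis $u_1,u_2$ and set $B=\tfrac12 u_2\in H^2(\Sigma,\IQ)$, so that $2B=u_2\in H^2(\Sigma,\IZ)$ and the associated Brauer class is $\alpha(v)=(u_2,v)\bmod 2$, which is surjective of order two. Its kernel is the set of vectors with even $u_1$-coordinate, so $\ker(\alpha)$ is the orthogonal sum of the sublattice generated by $2u_1$ and $u_2$, which is isometric to $U(2)$, and of $U\oplus E_8^{\oplus 2}\oplus\langle -2(n-1)\rangle$; hence $\ker(\alpha)\cong S$, giving $\transc(M_{v_B}(\Sigma,\alpha))\cong S$. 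For the Mukai vector take $v=(0,h,0)$ with $h$ a generator of $\ns(\Sigma)$: since $h\cdot u_2=0$ we get $v_B=(0,h,0)$, which is primitive with $v_B^2=h^2=2(n-1)$, so $M_{v_B}(\Sigma,\alpha)$ is of $\hskn$-type. A direct computation in $H^*(\Sigma,\IZ)$ shows that $\Pic(\Sigma,\alpha)$, generated by $(0,h,0)$, $(0,0,1)$ and $(2,u_2,0)$, is isometric to $\langle 2(n-1)\rangle\oplus U(2)$ with $(0,h,0)$ spanning the first summand; therefore $\Pic(M_{v_B}(\Sigma,\alpha))=v_B^\perp\cap\Pic(\Sigma,\alpha)\cong U(2)\cong T$.

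It remains to match the Hodge structures and to deduce a genuine isomorphism. Because $\ker(\alpha)$ has finite index in $\transc(\Sigma)$ we have $S\otimes\IC=\transc(\Sigma)\otimes\IC$, so $\Omega_S$ coincides with the period domain of $\transc(\Sigma)$. Starting from the very general period point $\omega_X\in\Omega_S$ of $X$, I would view it as a K3 period and invoke the surjectivity of the period map for K3 surfaces to obtain a very general $\langle 2(n-1)\rangle$-polarized surface $\Sigma$ with $\omega_\Sigma=\omega_X$ (so that $\ns(\Sigma)=\langle 2(n-1)\rangle$ exactly). By the description recalled above, $\transc(M_{v_B}(\Sigma,\alpha))\cong\ker(\alpha)$ carries the Hodge structure $\omega_\Sigma=\omega_X$, hence is Hodge isometric to $\transc(X)$.

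Finally, the twisted construction yields a $19$-dimensional family of marked $\hskn$-manifolds whose members have invariant and anti-invariant lattices of the same isometry type $(T,S)$, and hence lie in $\mathcal{M}_{U(2),\rho_1}$, with period points covering a dense subset of $\Omega_S^0$. Since the period map on $\mathcal{M}_{U(2),\rho_1}$ is surjective and generically injective by Proposition \ref{prop:U(2)irreducible}, the very general $X$ must be isomorphic to the corresponding $M_{v_B}(\Sigma,\alpha)$. The main obstacle, and the step requiring the most care, is precisely this last passage from a Hodge isometry of transcendental lattices to an actual isomorphism of manifolds: one has to upgrade it, after adjusting by isometries of $T$ and possibly by $\pm\id$, to an orientation-preserving parallel-transport operator sending a Kähler class to a Kähler class, so that the Global Torelli Theorem applies. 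What makes this possible here is exactly the feature used in Proposition \ref{prop:U(2)irreducible}, namely that $U(2)$ contains no wall-divisors, so that the positive cone of $T$ is a single Kähler-type chamber and there is a unique deformation type in $\mathcal{M}_{U(2),\rho_1}$.
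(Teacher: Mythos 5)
Your proposal is correct and takes essentially the same route as the paper: the same choice of Brauer class $\alpha=(2B,\cdot)$ with $B$ half of an isotropic vector in a hyperbolic summand of $\transc(\Sigma)$, the same Mukai vector $v=(0,h,0)=v_B$, the same lattice computations giving $\transc(M_{v_B}(\Sigma,\alpha))\cong\ker(\alpha)\cong S$ and $\Pic(M_{v_B}(\Sigma,\alpha))\cong U(2)$, and the same conclusion via the irreducibility of $\mathcal{M}_{U(2),\rho_1}$ (absence of wall-divisors in $U(2)$) together with generic injectivity of the period map. The only difference is presentational: where you spell out the Torelli-type matching of periods, the paper simply invokes Joumaah's Corollary 4.9.6.
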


\begin{proof}
Let $\Sigma$ be a generic projective $K3$ surface of degree $2(n-1)$, i.e.\ $\Pic(\Sigma) = \IZ L$ with $L = \mathcal{O}_\Sigma(H)$ for an effective, ample divisor $H$ with $H^2 = 2(n-1)$. Let $\left\{e_1, e_2 \right\}$ generate one of the summands $U$ in $\transc(\Sigma) \cong U^{\oplus 2} \oplus E_8^{\oplus 2} \oplus \langle -2(n-1) \rangle$, and consider the Brauer class of order two:
\[ \alpha: \transc(\Sigma) \ra \IZ/ 2 \IZ, \qquad v \mapsto (e_1, v).\]
Clearly, $B = \frac{e_1}{2} \in H^2(\Sigma, \IQ)$ is a $B$-field lift of $\alpha$ such that $B^2 = 0$ and $B \cdot H = 0$, since $2B \in \transc(\Sigma)$. Consider the primitive positive Mukai vector $v = (0,H,0)$: then
\[v_B = \left( 0, H, B \cdot H \right) = v\]
\noindent and the moduli space $M_{v_B}(\Sigma, \alpha)$ is a manifold of $\hskn$-type with
\[\transc(M_{v_B}(\Sigma, \alpha)) \cong \ker(\alpha) \cong U \oplus U(2) \oplus E_8^{\oplus 2} \oplus \langle -2(n-1) \rangle.\]
Moreover, $\Pic(\Sigma, \alpha) = \langle (0,H,0), (0,0,1), (2, 2B, 0) \rangle \cong \langle 2 \rangle \oplus U(2)$, thus
\[ \Pic(M_{v_B}(\Sigma, \alpha)) \cong v_B^\perp \cap \Pic(\Sigma, \alpha) = \langle (0,0,1), (2, e_1, 0) \rangle \cong U(2). \]

Hence, the moduli space $Y = M_{v_B}(\Sigma, \alpha)$ constructed above has $\Pic(Y) \cong T$, $\transc(Y) \cong S$ for the lattices $T,S$ of Proposition \ref{prop:U(2)irreducible}. By the same proposition we know that the moduli space $\mathcal{M}_{U(2), \rho_1}$ is irreducible. For $(X, \eta) \in \mathcal{M}_{U(2), \rho_1}$ very general we also have $\Pic(X) \cong T$ and $\transc(X) \cong S$ (via the marking $\eta$). Hence, the statement follows from the generic injectivity of the period map for $U(2)$-polarized manifolds of $\hskn$-type (see \cite[Corollary 4.9.6]{joumaah}).
\end{proof}

\begin{rem}
For $(X, \eta) \in \mathcal{M}_{U(2), \rho_1}$, let $i \in \aut(X)$ be the non-symplectic involution such that $\eta \circ i^\ast= \rho_1 \circ \eta$. Even though, for $(X, \eta)$ very general, the manifold $X$ is isomorphic to $Y = M_{v_B}(\Sigma, \alpha)$ as in the previous proposition, if $n \geq 3$ we cannot realize the automorphism $i$ as a twisted induced involution  on $Y$ (in the sense of \cite{ckkm}), since the group of automorphisms of the $K3$ surface $\Sigma$ is trivial (see \cite[\S 5]{saint-donat}).
\end{rem}

\begin{prop} \label{prop: twisted induced U(2) n=5}
For $n = 5$, let $\mathcal{M}_{U(2), \rho_2}$ be the moduli space of Proposition \ref{prop:U(2) 3 chambers}. There exists an irreducible component $\mathcal{M}^0 \subset \mathcal{M}_{U(2), \rho_2}$ such that, for the very general element $(X,\eta) \in \mathcal{M}^0$ with $\eta(\Pic(X)) \cong U(2)$, the manifold $X$ is isomorphic to a moduli space $Y$ of twisted sheaves on a very general projective $\langle 2 \rangle$-polarized $K3$ surface. Moreover, the non-symplectic involution $i \in \aut(X)$ such that $\eta \circ i^\ast= \rho_2 \circ \eta$ is realized by a twisted induced automorphism on $Y$.
\end{prop}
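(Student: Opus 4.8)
The plan is to adapt the construction of Proposition \ref{prop: twisted induced U(2) irred} to the \emph{second} embedding of $U(2)$ by working on a degree-two $K3$ surface in place of a degree-$2(n-1)$ one. For $n=5$ we have $L_5 = U^{\oplus 3} \oplus E_8^{\oplus 2} \oplus \langle -8 \rangle$. I would let $\Sigma$ be a very general projective $K3$ surface with $\Pic(\Sigma) = \IZ L$, $L^2 = 2$, so that $\transc(\Sigma) \cong U^{\oplus 2} \oplus E_8^{\oplus 2} \oplus \langle -2 \rangle$, and write $\delta$ for a generator of the summand $\langle -2 \rangle$. The key observation is that $S \cong U^{\oplus 2} \oplus E_8^{\oplus 2} \oplus \langle -8 \rangle$ is obtained from $\transc(\Sigma)$ by replacing $\delta$ with $2\delta$, so $S$ is exactly an index-two sublattice of $\transc(\Sigma)$; I would realize it as $\ker(\alpha)$ for the order-two Brauer class $\alpha \colon \transc(\Sigma) \to \IZ/2\IZ$ determined by $\alpha(\delta)=1$ and $\alpha \equiv 0$ on $U^{\oplus 2}\oplus E_8^{\oplus 2}$. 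Since $\delta$ pairs evenly with all of $\transc(\Sigma)$, no $B$-field supported on $\transc(\Sigma)$ can detect it, and one is forced to use the glue vector of the degree-two $K3$: the lift $B = \tfrac14(L-\delta)$ satisfies $2B = \tfrac12(L-\delta) \in H^2(\Sigma,\IZ)$ and $\alpha(v) = (2B,v) \bmod 2$.

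With $\Sigma,\alpha,B$ fixed, I would take the positive primitive Mukai vector $v = (0,2L,0)$, so that $v_B = (0,2L,1)$ is primitive with $v_B^2 = 8$, and set $Y = M_{v_B}(\Sigma,\alpha)$, a manifold of $K3^{[5]}$-type. By the recollections preceding Proposition \ref{prop: twisted induced U(2) irred}, $\transc(Y) \cong \ker(\alpha) \cong U^{\oplus 2}\oplus E_8^{\oplus 2}\oplus\langle -8\rangle = S$, which is the anti-invariant lattice of the \emph{second} embedding (it contains no $U(2)$ summand, unlike the first). Writing $w_1=(0,L,0)$, $w_2=(0,0,1)$, $w_3=(2,2B,0)$, a direct computation in $\Pic(\Sigma,\alpha) = \langle w_1,w_2,w_3\rangle$ gives $w_1\cdot w_3 = 1$, $w_2\cdot w_3 = -2$, $v_B = 2w_1+w_2$, hence $v_B^\perp \cap \Pic(\Sigma,\alpha) = \langle w_2,w_3\rangle$ with Gram matrix $\left(\begin{smallmatrix} 0 & -2 \\ -2 & 0\end{smallmatrix}\right) \cong U(2) = T$. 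Thus $Y$ determines a point of $\mathcal{M}_{U(2),\rho_2}$. As $\Sigma$ varies in its connected $19$-dimensional family this produces a connected $19$-dimensional family of such $Y$, which therefore fills a dense subset of a single irreducible component $\mathcal{M}^0$; since $\dim\mathcal{M}^0 = 19$ and the period map is generically injective on each component (Proposition \ref{prop:U(2) 3 chambers} and \cite[Cor.~4.9.6]{joumaah}), the very general $(X,\eta)\in\mathcal{M}^0$ with $\eta(\Pic(X))\cong U(2)$ is isomorphic to such a $Y$.

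For the involution, I would use that the degree-two surface $\Sigma$ is a double cover of $\IP^2$ branched along a sextic, with covering involution $\iota$ acting as $\id$ on $\Pic(\Sigma)$ and $-\id$ on $\transc(\Sigma)$; this is a non-symplectic involution. Because $-\id$ reduces to $\id$ modulo $2$ we have $\iota^*\alpha = \alpha$, and clearly $\iota^* v = v$. By \cite{ckkm}, $\iota$ then induces a twisted automorphism $\hat\iota \in \aut(Y)$, non-symplectic since $\iota^*$ negates the symplectic form. Its action on $H^2(Y,\IZ)\cong v_B^\perp$ is $\iota^*$ corrected by the $B$-field twist, namely $\psi = \exp(B - \iota^*B)\circ\iota^*$ with $B-\iota^*B = -\tfrac12\delta$. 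The central computation is that $\psi$ fixes the twisted Picard classes: $\psi(w_2)=w_2$ trivially, while $\iota^*w_3 = (2,\tfrac12(L+\delta),0)$ and applying $\exp(-\tfrac12\delta)$ returns the $H^2$-component to $\tfrac12(L-\delta)=2B$ and the $H^4$-component to $0$, so $\psi(w_3)=w_3$. Hence $\hat\iota^*$ restricts to $\id$ on $\Pic(Y)\cong U(2)$; being a non-symplectic involution it must act as $-\id$ on $\transc(Y)$, so $\hat\iota^* = \id_T\oplus(-\id_S)$ extends to $\rho_2$. By injectivity of $\aut(Y)\to O(H^2(Y,\IZ))$, the involution $i$ on the very general $X\cong Y$ is precisely the twisted induced automorphism $\hat\iota$.

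The main obstacle I anticipate is the last step: carefully tracking the $B$-field twist in the induced action $\psi = \exp(B-\iota^*B)\circ\iota^*$, confirming that it preserves the integral twisted Mukai lattice and fixes $w_2$ and $w_3$. The cancellation between the $\exp(-\tfrac12\delta)$ correction and the sign change of $\iota^*$ on the transcendental part is exactly what makes $\hat\iota^*$ act trivially on $\Pic(Y)$, and getting the sign convention of \cite{ckkm} right is the delicate point. A secondary matter is to verify that $\ker(\alpha)$ genuinely yields the second embedding rather than the first and that all the $Y$'s lie in one component $\mathcal{M}^0$; both are settled by the isometry type of $\transc(Y)$ together with the connectedness of the family of surfaces.
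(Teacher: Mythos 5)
Your proposal is correct and follows essentially the same route as the paper: the same degree-two $K3$ surface, the same Brauer class with kernel $S$, the same $B$-field (your $B=\tfrac14(L-\delta)$ equals the paper's $\tfrac{e_2}{2}$ under $L=e_1+e_2$, $\delta=e_1-e_2$), the same Mukai vector $(0,2L,0)$, and the same induced involution via \cite{ckkm} combined with generic injectivity of the period map. The only difference is that you carry out explicitly the $\exp(B-\iota^*B)\circ\iota^*$ computation showing the induced action fixes $(2,2B,0)$, where the paper simply cites \cite[Remark 2.4]{ckkm}.
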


\begin{proof}
Let $\Sigma$ be the double cover of $\IP^2$ branched along a smooth sextic curve. We have $\Pic(\Sigma) \cong \langle 2\rangle$ and $\transc(\Sigma) \cong U^{\oplus 2} \oplus E_8^{\oplus 2} \oplus \langle -2 \rangle$. If we denote by $g$ the generator of the summand $\langle -2 \rangle$ inside $\transc(\Sigma)$, then the (non-primitive) index two sublattice $U^{\oplus 2} E_8^{\oplus 2} \oplus \langle 2g \rangle \subset \transc(\Sigma)$ is isometric to $S = U^{\oplus 2} \oplus E_8^{\oplus} \oplus \langle -8\rangle$. Let $\alpha$ be the following Brauer class of order two:
\[ \alpha: \transc(\Sigma) \ra \IZ/ 2\IZ, \qquad \lambda + mg \mapsto m\]
\noindent where $\lambda \in U^{\oplus 2} \oplus E_8^{\oplus 2}$ and $m \in \IZ$. Clearly, $\ker(\alpha) = U^{\oplus 2} E_8^{\oplus 2} \oplus \langle 2g \rangle \cong S$. Let $\left\{e_1, e_2\right\}$ generate a summand $U$ inside $H^2(\Sigma, \IZ) \cong U^{\oplus 3} \oplus E_8^{\oplus 2}$. We can assume that $e_1 + e_2$ is the generator of $\Pic(\Sigma)$ and therefore $g = e_1 - e_2$. Notice that the rational class $B = \frac{e_2}{2} \in H^2(\Sigma, \IQ)$ is a $B$-field lift for $\alpha$, since $\alpha(x) = (e_2, x) \in \IZ/ 2\IZ$ for all $x \in \transc(\Sigma)$. Consider the (non-primitive) positive Mukai vector \mbox{$v = (0,2(e_1+e_2),0) \in H^*(\Sigma, \IZ)$}. When twisting $v$ with respect to the $B$-field lift $B$, we obtain $v_B = (0, 2(e_1+e_2), 1)$, which is now primitive of square $8$. Hence, the moduli space $M_{v_B}(\Sigma, \alpha)$ is a manifold of $K3^{[5]}$-type with transcendental lattice isomorphic to $S$. Moreover
\[\Pic(\Sigma, \alpha) = \langle (0,e_1+e_2,0), (0,0,1), (2, e_2, 0) \rangle\]
\noindent thus
\[ \Pic(M_{v_B}(\Sigma, \alpha)) \cong v_B^\perp \cap \Pic(\Sigma, \alpha) = \langle (0,0,1), (2, e_2, 0) \rangle \cong U(2).\]
Since $\Sigma$ is a double cover of the plane, it is equipped with a non-symplectic involution $\iota$, which acts as $\id$ on $H^0(\Sigma, \IZ) \oplus \Pic(\Sigma) \oplus H^4(\Sigma, \IZ)$ and as $-\id$ on $\transc(\Sigma)$. This implies that both the Brauer class $\alpha: \transc(\Sigma) \ra \IZ / 2\IZ$ and the twisted Mukai vector $v_B = (0, 2(e_1 + e_2), 1)$ are $\iota$-invariant. Then, by \cite[\S 3]{ckkm}, the moduli space $Y = M_{v_B}(\Sigma, \alpha)$ comes with a (non-symplectic) induced involution $\widetilde{\iota}$. In particular, the invariant lattice of $\widetilde{\iota}$ is the whole $\Pic(M_{v_B}(\Sigma, \alpha))$, since $\iota$ acts trivially on $\langle (0,0,1), (2,e_2,0) \rangle$ by \cite[Remark 2.4]{ckkm} (the two classes $(2,e_2,0)$ and $(2,\iota^*(e_2),0) = (2,e_1,0)$ coincide in $H^2(M_{v_B}(\Sigma, \alpha), \IZ)$). As in Proposition \ref{prop: twisted induced U(2) irred}, the statement follows from the generic injectivity of the period map, after recalling that $\mathcal{M}_{U(2), \rho_2}$ has three irreducible components by Proposition \ref{prop:U(2) 3 chambers}.
\end{proof}

\bibliographystyle{amsplain}
\bibliography{NonSympInvK3nBiblio}

\end{document}